\newcommand{\Tamn}{\operatorname{Tam}_n}
\newcommand{\Dyckn}{\operatorname{Dyck}_n}
\newcommand{\Deltamn}{\operatorname{Tam}^\delta_n}
\newcommand{\Y}{\textsf{Y}}
\newtheorem{theorem}{Theorem}[section] 
\newtheorem{proposition}[theorem]{Proposition} 
\newtheorem{corollary}[theorem]{Corollary} 
\newtheorem{lemma}[theorem]{Lemma} 
\newtheorem{definition}{Definition}
\newtheorem{remark}[theorem]{Remark}
\title{Linear Intervals in the Tamari and the \\
	Dyck Lattices and in the alt-Tamari Posets}
\author{Clément Chenevière}
\date{August 31, 2022} 
\begin{document}
	
	\maketitle
	
	\begin{abstract} 
		We count the number of linear intervals in the Tamari and the Dyck lattices according to their height, using generating series and Lagrange inversion.
		Surprisingly, these numbers are the same in both lattices.
		We define a new family of posets on Dyck paths, which we call alt-Tamari posets.
		Each alt-Tamari poset depends on the choice of an increment function $\delta \in \{0,1\}^n$.
		We recover the Tamari and the Dyck lattices as extreme cases with $\delta=1$ and $\delta=0$, respectively.
		We prove that all the alt-Tamari posets have the same number of linear intervals of any given height.
		
	\end{abstract}
	
	\tableofcontents
	
	
	\section*{Introduction} 
	
	In a partially ordered set or poset, when two elements $ P $ and $ Q $ are comparable, the interval $ [P,Q] $ is the subset of all elements $ R $ that satisfy $ P \leq R \leq Q $.
	When studying a poset, one may be interested in its intervals and the simplest of them are those which are totally ordered, that we call \emph{linear intervals}.
	
	It is quite natural to count linear intervals when looking at a finite poset, although this has not been much studied.
	In this work, we are interested mainly in two classical posets on Catalan objects, namely the Tamari and the Dyck lattices \cite{tamari,stanley,bernardi-bonichon}.
	Surprisingly, these two posets share the same number of linear intervals of any given height, where the height is the cardinality minus one.
	Furthermore, these numbers are very simple.
	We prove this result and give an explicit formula for these numbers.
	
	This coincidence was a hint that there was a stronger link between these two posets than what was previously known.
	In fact, one can define a whole new family of posets that interpolate between the Tamari and the Dyck lattices, and we call them the alt-Tamari posets.
	We define and study this family in this paper and we also prove that they all have the same number of linear intervals of any fixed height.
	
	There are two other families of posets that generalize the Tamari lattices, namely the Cambrian lattices of type $ A $ and the posets of tilting modules.
	The former were defined by Reading in \cite{reading1,reading2} as the restriction of the weak order on the subset of $ c $-sortable elements for some standard Coxeter element $ c $.
	The latter were defined by Happel and Unger, following work of Riedtmann and Schofield \cite{riedtmann-schofield,happel-unger} in the context of quiver representations. Interested readers may want to look at the survey article \cite{thomas}.
	
	All posets of these two other families also seem to have the same number of linear intervals of any given height as the Tamari lattice.
	
	These three families have even more in common, namely their categories of modules seem to be all derived-equivalent.
	This has been proven for posets of tilting modules and Cambrian posets by Ladkani in  \cite{ladkani1,ladkani2}, but this is only conjectural for the alt-Tamari posets.
	In particular, these new posets between the Tamari and the Dyck lattices might be useful for proving their expected derived equivalence.
	

	In all posets, intervals of height $ 0 $ are the trivial intervals of the form $ [P,P] $ and intervals of height $ 1 $ are those reduced to $ 2 $ elements, that we call the covering relations and that we denote by $ P \triangleleft Q $.
	
	Nevertheless, some posets do not have any linear interval of height $ 2 $ or more, as for instance the third classical poset on Catalan objects, namely the non-crossing partition lattice with coarsening order.
	The Boolean lattice $ B_n $ and the set partition lattice $ P_n $ are two other examples.
	In both cases, all intervals of height $ 2 $ are isomorphic to the Boolean lattice $ B_2 $ or to the set partition lattice $ P_2 $, which are not totally ordered.
	The posets which do not have linear intervals of height $ 2 $ are said to be $ 2 $-thick, and for lattices of finite length, this is equivalent to be relatively complemented \cite{bjorner}.
	
	Let us note that this new family of posets also has close links with the $\nu$-Tamari lattices.
	On the one hand, there are strong hints that all posets and results would generalize to this larger context.
	On the other hand, the alt-Tamari posets can be understood as intervals in some $\nu$-Tamari lattices, as understood after discussions with Wenjie Fang.
	
	In further work, we may look at the Cambrian lattices, the posets of tilting modules and other properties than the number of linear intervals.
	We could also investigate if the techniques in this paper give some results in the case of other posets as for instance the $ m $-Tamari \cite{bergeron-preville-ratelle} and the $ m $-Cambrian lattices \cite{cataland} or the Cambrian lattices and posets of tilting modules in other types.
	Several other families of posets seem to have nice formulas for their numbers of linear intervals, for instance weak orders on the symmetric groups.
	
	
	
	\medskip
	In \cref{sec:def} we recall classical material on intervals and posets and the Tamari and the Dyck lattices.
	In \cref{sec:Tamari,sec:Dyck}, we study the structure of linear intervals in the Tamari and the Dyck lattices.
	In both cases, we first define a notion of left and right intervals, which we prove to be linear.
	We then show that all linear intervals of height $ k \geq 2 $ are either left or right intervals.
	Then, we give a bijection between linear intervals of height $ 1 $ (resp. $ k \geq 2 $) and a marked Catalan object (resp. with a direction) and one Catalan object (resp. a sequence of $ k $ Catalan objects).
	From this bijection, we deduce equations on the generating series of linear intervals of a given height.
	Finally, we solve these equations and compute the number of linear intervals.
	
	In \cref{sec:altTamari}, we define the family of alt-Tamari posets which generalizes both the Tamari and the Dyck lattices.
	We study their linear intervals and show that there are similar notions of left and right intervals and similar decompositions.
	This gives a bijection between linear intervals of any two alt-Tamari posets that respects the height.
	It follows that their linear intervals are counted by the same numbers and this is the main result of this paper.
	\begin{theorem} \label{thm:maintheorem}
		For any increment function of size $ n $, the alt-Tamari poset $ \Deltamn $ contains:
		\begin{itemize}
			\item $ \displaystyle \frac{1}{n+1} \binom{2n}{n} $ linear intervals of height $ 0 $,
			\item  $\displaystyle \binom{2n-1}{n-2}$ linear intervals of height $ 1 $,
			\item  $\displaystyle 2 \binom{2n-k}{n-k-1}$ linear intervals of height $ k $, for $ 2 \leq k < n $.
		\end{itemize}
		Furthermore, there are  no linear interval of height $ k \geq n $.
	\end{theorem}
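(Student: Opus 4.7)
The plan is to reduce the general $\delta$ case to a single reference case, say the Tamari lattice $\Tamn$ (corresponding to $\delta=(1,\ldots,1)$), via a height-preserving bijection on linear intervals, and then read off the counts from the enumeration carried out in \cref{sec:Tamari}. The structure of the paper, as outlined in the introduction, makes this strategy natural: for each $\delta$, linear intervals should admit the same left/right decomposition that governs the Tamari and Dyck cases, and once this decomposition is established the bijection is almost automatic.

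The first step is to extend the notion of left and right linear intervals from \cref{sec:Tamari,sec:Dyck} to an arbitrary alt-Tamari poset $\Deltamn$. I would define these intervals via the local shape of covering relations in $\Deltamn$, verify that they are linear, and show that every linear interval of height $\geq 2$ falls into exactly one of the two classes. The second step is to encode such an interval by combinatorial data independent of $\delta$: essentially a Dyck path with a marked position, plus a choice of direction (left or right) for heights $\geq 2$. Since this encoding does not reference $\delta$, it furnishes a height-preserving bijection from the linear intervals of $\Deltamn$ to those of $\Tamn$.

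With the bijection in hand, the numerical claims follow from the Tamari case. Height $0$ is simply $|\Tamn|=\tfrac{1}{n+1}\binom{2n}{n}$. Height $1$ is the number of covering relations, which \cref{sec:Tamari} shows to equal $\binom{2n-1}{n-2}$. For $2\leq k<n$, the left/right decomposition expresses a linear interval of height $k$ as a chain built from $k$ Catalan objects in one of two directions, and the generating series manipulations together with Lagrange inversion yield $2\binom{2n-k}{n-k-1}$, the factor $2$ accounting for the two directions. The vanishing at heights $k\geq n$ follows because the combinatorial data needed to describe such a chain already exhausts all available room when $k=n$. The main obstacle will be the very first step: showing that the left/right decomposition, so natural in the two extreme cases, extends uniformly to every $\delta$. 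Since covering relations in $\Deltamn$ depend on $\delta$ in a local but nontrivial way, one must carefully verify that the resulting notion is still well defined, linear, and exhaustive for linear intervals of height $\geq 2$; after that verification, the rest of the proof reduces to invoking the Tamari enumeration.
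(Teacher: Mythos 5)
Your proposal follows essentially the same route as the paper: the paper defines left and right intervals in $\Deltamn$ in terms of $\delta$-excursions, proves they are linear and that every linear interval of height $k \geq 2$ is one of the two, and then encodes each such interval (and each covering relation) as a marked Dyck path together with a sequence of $k$ Dyck paths — data independent of $\delta$ — so that the counts reduce to the Tamari/Dyck enumeration already obtained by Lagrange inversion. The technical verifications you defer (well-definedness, linearity, exhaustiveness of the left/right classes for general $\delta$) are exactly where the paper's work lies, via lemmas on how $\delta$-rotations affect $\delta$-excursions, but your outline of the strategy and of the final counting is correct.
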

	
	We also prove additional results on the family and in particular that there is a boolean structure of refinement of the alt-Tamari posets.

	\section{Definitions and first properties}  \label{sec:def}
	
	\subsection{Linear intervals}
	In a poset, if we have two elements $ P \leq Q $, then a \emph{chain} of length $ k $ from $ P $ to $ Q $ is a sequence of elements $ P = P_0 < P_1 < \dots < P_k = Q $.
	Recall that the \emph{height} of an interval $ [P,Q] $ is defined as the maximal length of a chain from $ P $ to $ Q $ and that an interval is \emph{linear} if it is totally ordered.
	
	In any poset, the intervals of height $ 0 $ are those of the form $ [P,P] $ with $ P $ some element of the poset.
	They are clearly linear.
	We call them \emph{trivial intervals} and they obviously are in bijection with elements of the poset.
	
	Intervals $ [P,Q] $ of height $ 1 $ are by definition reduced to two elements since otherwise they would contain a chain of length $ 2 $ or more.
	They are linear as well.
	Such an interval is called a \emph{covering relation} and is denoted $ P \triangleleft Q $.
	
	A chain is said to be \emph{maximal} if for all $ 0 \leq i < k $, $ P_i \triangleleft P_{i+1} $ is a covering relation.
	In a finite poset, an interval $ [P,Q] $ is linear if and only if there is a unique maximal chain from $ P $ to $ Q $.
	The length of this maximal chain is exactly the \emph{height} of the interval.
	
	We can define the Cartesian product of two posets $ P $ and $ Q $ as a poset $ P \times Q $ whose elements are couples $ (p,q) $ with $ p \in P $ and $ q \in Q $ and whose order is $ (p,q) \leq (p',q') $ component wise.
	The Cartesian product of two intervals in $ P $ and $ Q $ gives obviously an interval in $ P \times Q $, but the converse is also true, namely every interval of $ P \times Q $ comes from the Cartesian product of two intervals.
	
	We can thus keep track of the counting of linear intervals in a multiplicative way in the Cartesian product of two posets.
	To do so, we have to distinguish trivial intervals and linear intervals which are not trivial.
	Indeed, the Cartesian product of two intervals is linear if and only if one of the intervals is trivial and the other is linear, and it is trivial if and only if both intervals are trivial.
	
	Let $ \epsilon $ be a formal variable with $\epsilon^2 = 0$.
	Given a poset $ P $, we can define the polynomial $ L(P) = T(P) + U(P) \epsilon $ where $ T(P) $ is the number of trivial intervals of $ P $ and $ U(P) $ counts its non trivial linear intervals.
	Then, we have  \begin{equation*}
		L(P \times Q) = L(P)  L(Q)
	\end{equation*} in $ \mathbb{Z}[\epsilon] $.
	This procedure could be helpful in generalizing some results or in counting linear intervals in posets where intervals are understood through a recursive decomposition.

	\subsection{Trees and Tamari lattices}
	
	
	
	A \emph{planar rooted binary tree} is a finite connected acyclic planar graph whose vertices have degree $ 3  $ or $ 1 $, with one marked vertex of degree $ 1 $ called the root.
	The other vertices of degree $ 1 $ are called the leaves and those of degree $ 3 $ are the nodes.
	In what follow all trees will be planar rooted binary trees and we will draw them with leaves at the top and the root at the bottom.
	The node connected to the root will be called the root node.
	We will usually not draw the root and have thus only the root node of degree $ 2 $ at the bottom.
	The size of a tree will be its number of nodes.
	The tree of size $ 0 $, reduced to a leaf will be considered as a trivial tree.
	
	For $n \geq 1$, let us denote by $ Y_n $ the set of all trees of size $n$.
	The Tamari lattice $\Tamn$ of size $n$ is a partial order on $ Y_n $, described as the reflexive transitive closure of the left rotations (see \cref{fig:coveringrelation}) .
	This poset was first defined and studied by Tamari \cite{tamari} and happens to be a lattice, see \cite{tamari-huang}.
	Its covering relations are exactly all left rotations.
	We will usually not consider the Tamari lattice of size $ 0 $ and its unique trivial interval.

	We can \emph{graft} a tree $ T' $ on a chosen leaf of another tree $ T $.
	This is done by deleting the root of $ T' $ and identifying the root node $ r $ of the tree $ T' $ with the chosen leaf of $ T $, producing a tree whose size is the sum of the sizes of $ T $ and $ T' $, as drawn in \cref{fig:grafting}.
	Grafting a tree of size $ 0 $ does nothing.
	
	\begin{figure}[h]
		\centering
		\includegraphics[width=5cm]{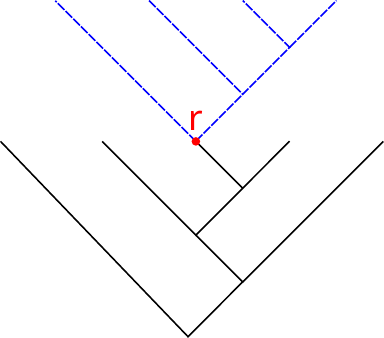}
		\caption{Grafting a tree on another one.}
		\label{fig:grafting}
	\end{figure}
	
	We can also \emph{plug} a tree $ T' $ into a chosen edge of another tree $ T $, except for the edge between the root and the root node.
	To do so, we create a new node $ n $ on the selected edge of $ T $, and we identify this node with the root of $ T' $.
	If the selected edge was a left son, the tree $ T' $ will be the right son of $ n $, and vice versa.
	If $ T $ and $ T' $ are respectively of size $ m $ and $ m' $, plugging $ T' $ into an edge of $ T $ will produce a tree of size $ m+m'+1 $.
	In \cref{fig:plugging}, we take the same two trees $ T $ and $ T' $ as in the example of grafting, but this time, we plug $ T' $ into the red dotted edge of $ T $.
	Note that plugging a tree of size $ 0 $ on an edge of a tree $ T $ produces a tree that is different from $ T $, with one more vertex.
	
	\begin{figure}[h]
		\centering
		\includegraphics[width=5cm]{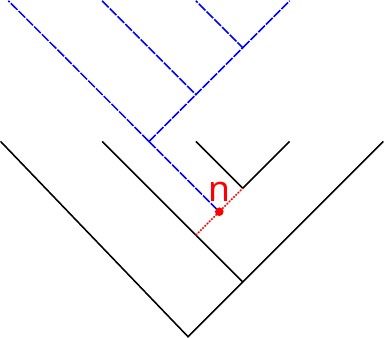}
		\caption{Plugging a tree into the selected (red dotted) edge of another one.}
		\label{fig:plugging}
	\end{figure}
	
	In the following, we will need a special family of trees, namely left and right combs, recursively defined as follows:
	$ \ell_1 = r_1 =: \Y$ is the only tree of size $ 1 $ and for $ n \geq 2 $, $ \ell_n $ (resp. $ r_n $) is obtained as the grafting of $ \Y $ on the leftmost (resp. rightmost) leaf of $ \ell_{n-1} $ (resp. $ r_{n-1} $).
	In \cref*{fig:R3L4}, the first tree of the first row is the right comb $ r_4 $ and the last tree of the second row is the left comb $ \ell_5 $.
	
	There is an involution on trees that exchanges a tree with its mirror image.
	This can be seen as a vertical symmetry on the drawings.
	As an example, the right comb $ r_n $ and the left comb $ \ell_n $ are exchanged through this involution.
	
	\begin{remark}
		This involution on trees is an order-reversing involution on the Tamari lattice.
	\end{remark}
	\begin{proof}
		Let  $ P \triangleleft Q $ be a covering relation and $ P^* $ (resp. $ Q^* $) be the mirror image of $ P $ (resp. $ Q $).
		Then $ Q^* \triangleleft P^* $ is a covering relation in the Tamari lattice.
	\end{proof}
	
	\subsection{Structure of intervals in the Tamari lattices}
	
	In \cite{chapoton1}, the author introduces an operadic structure on intervals in the Tamari lattices with the operation of grafting of intervals.
	See this article for more details on the results of this section.
	
	If $ I = [P,Q] $ is an interval in $ \operatorname{Tam}_n $ and $ I' = [P',Q'] $ is an interval in $ \operatorname{Tam}_m $, we can define the grafting of $ I' $ on the $ k $-th leaf of $ I $ for some $ 0 \leq k \leq n $, producing an interval $ I'' \in \operatorname{Tam}_{n+m}$ as represented in \cref{fig:graftinterval}.
	The bottom element of $ I'' $ is obtained by grafting $ P' $ on the $ k $-th leaf of $ P $ and the top element of $ I'' $ is obtained by grafting $ Q' $ on the $ k $-th leaf of $ Q $.
	As an interval, $ I'' $ is isomorphic to the product of the intervals $ I $ and $ I' $.
	
	\begin{figure}[h] 
		\begin{center}
			\centering
			\begin{minipage}[c]{.4\linewidth}
				\begin{center}
					\includegraphics[width=4.5cm]{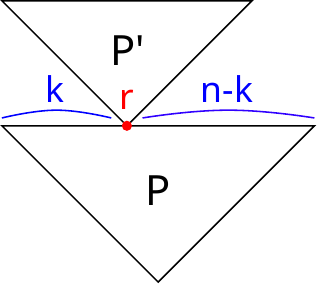}
				\end{center}
			\end{minipage}
			$ \leq $
			\begin{minipage}[c]{.4\linewidth}
				\begin{center}
					\includegraphics[width=4.5cm]{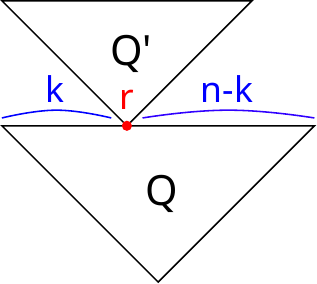}
				\end{center}
			\end{minipage}
			\caption{Grafting an interval $ [P',Q'] $ on the $ k $-th leaf of the interval $ [P,Q] $.}
			
			\label{fig:graftinterval}
		\end{center}
	\end{figure} 
	
	The author also defines \emph{new intervals}, that cannot be written as an interval grafted on another one. 
	This gives a unique decomposition of intervals into new intervals.
	Any interval then has the structure of the product of the new intervals in its decomposition.
	
	\begin{remark} \label{rem:grafting}
		If an interval $ I'' = [P'',Q''] $ is not new, then it decomposes into an interval $ I' = [P',Q'] $ grafted on the $ k $-th leaf of some interval $I = [P,Q]  \in \Tamn $ for some $ 0 \leq k \leq n $.
		
		Thus, there is a common node $ r $ in $ P'' $ and $ Q'' $ such that there are $ k $ leaves on the left of $ r $ and $ n-k $ leaves on its right, as show in \cref{fig:graftinterval}.
		The node $ r $ is precisely the root node of $ P' $ and $ Q' $ that has been identified to the selected leaf of $ P $ and $ Q $.
	\end{remark}
	
	In fact, such common nodes between the bottom and top elements of an interval are precisely the root nodes of the new intervals in its decomposition.
	
	\subsection{Dyck paths and Dyck lattices} \label{sec:Dyckpaths}
	
	A \emph{Dyck path} of length (or size) $n$ is a path in $\mathbb{N}^2$ consisting of up steps $(1,1)$ and down steps $(1,-1)$, starting from $(0,0)$ and ending at $(2n,0)$. 
	A Dyck path can be seen as a word in letters $ u $ (for up steps) and $ d $ (for down steps) with each letter appearing $ n $ times and such that in any prefix the letter $ u $ appears at least as many times as the letter $ d $.
	Such words are also called Dyck words and we will identify a Dyck path to its Dyck word, and more generally any word in letters $ u $ and $ d $ with the associated path.
	The height of a step is defined as the second coordinate of the vertex at which it starts.
	Equivalently, it is equal to the number of $ u $ preceding it minus the number of $ d $ preceding it.
	
	In all of the following, a subword $ B $  of a word $ A $ will be a portion of consecutive letters of $ A $.
	We will denote $ B \subset A $.
	
	There is a natural matching of up and down steps. 
	Given an up step, we can define the \emph{excursion} starting at this step as the portion of the path that starts at this up step and ends at the first down step that ends at the same height.
	In other words, it is the smallest subword starting at this letter $ u $ that is a Dyck word.
	Then, the last step of the excursion is the down step that matches the first step of the excursion.
	
	For $ n \geq 1 $, let us denote by $ Z_n $ the set of all Dyck paths (or words) of size $ n $.
	A natural order on $ Z_n $ is the inclusion order, where we say that a path $ P $ is included in a path $ Q $ if $ P $ is always under $ Q $ when we draw them together.
	In other words, for every $ 1 \leq k \leq 2n $, the height of the $ k $-th step of $ P $ is less than or equal to the height of the $ k $-step of $ Q $. 
	This produces the \emph{Dyck lattice} $ \Dyckn $ of size $ n $, sometimes known as the Stanley lattice (see \cite{stanley,bernardi-bonichon}). 
	
	Given a Dyck path, we define a \emph{valley} as a down step followed by an up step, and a \emph{peak} as an up step followed by a down step.
	The covering relations in $ \Dyckn $ can be defined as transforming a valley $ du $ in a peak $ ud $.
	Similarly as in the Tamari lattice, the path of size $ 0 $ will be considered as a trivial path but we will usually not consider the Dyck lattice of size $ 0 $ and its unique trivial interval.
	
	\begin{remark}
		We can define another order on the Dyck paths, where the covering relations consist of swapping a down step with the excursion that follows it, if any.
		
		When doing so, we recover a lattice isomorphic to the Tamari lattice that we defined on trees.
		This alternative description was introduced by Bergeron and Préville-Ratelle in \cite{bergeron-preville-ratelle}.
		
		Furthermore, the inclusion order refines the Tamari order, in the sense that any interval $ [P,Q] $ in the Tamari lattice defined this way will be an interval in $ \Dyckn $.
	\end{remark}
	
	We will prove that both lattices have the same number of linear intervals of any height, however a linear interval in $ \Tamn $ will usually not be linear in $ \Dyckn $.
	
	There is also an involution on Dyck paths that exchanges a path with its mirror image, that can be seen as a vertical symmetry on the drawings.
	More precisely, it reverses up and down steps and the order of the steps.
	
	\begin{remark}
		This involution on Dyck paths is an order-preserving involution on the Dyck lattice.
		However, it is neither an order-preserving nor an order-reversing involution on the Tamari lattice.
	\end{remark}

	\section{Linear intervals in the Tamari lattices} \label{sec:Tamari}

	\subsection{Structure of linear intervals}

	Let us define two particular intervals $ L_n $ and $ R_n $ in $ \operatorname{Tam}_{n+1} $, for $ n \geq 2$.
	We will see that they are actually the only linear new intervals in $  \operatorname{Tam}_{n+1}$ and also its linear  intervals of greatest height, namely $ n $.
	
	The interval $ R_n $ has the right comb $ r_{n+1} $ as bottom element and its top element is obtained from $ r_{n+1} $ by performing one rotation at each node of the right side, from top to bottom.
	In other words, the top element of $ R_n $ is the tree whose root node has a leaf as right son and a right comb $ r_n $ as left son.
	
	The interval $ L_n $ is the mirrored version of $ R_n $.
	Its top element is the left comb $ \ell_{n+1} $ and its bottom element is the tree whose root node has a leaf as left son and a left comb $ \ell_n $ as right son.
	
	\begin{remark} \label{rem:newlinear}
		For $ n \geq 2 $, both intervals $ L_n $ and $ R_n $ are linear of height $ n $ since there is only one possible way to perform a right rotation to go down in $ R_n $ at every step (or a left rotation to go up in $ L_n $).
		
		Moreover, they are new since their top and bottom element do not share any common node as explained in \cref{rem:grafting}. Indeed, all nodes in the bottom element of $ L_n $ have exactly one leaf strictly on their left whereas all nodes in the top element of $ L_n $ have no leaves strictly on their left.
		
	\end{remark}
	We can also define $ L_1 = R_1 $ as the interval whose bottom element is $ r_2 $ and whose top element is $ \ell_2 $.
	It is linear of height $ 1 $ and new. 
	
	\begin{figure}[h]
		\begin{center}
			\centering
			\begin{minipage}[c]{.2\linewidth}
				\includegraphics[width=2.5cm]{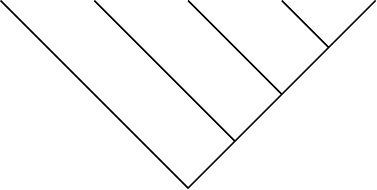}
			\end{minipage}
			$ \triangleleft $
			\begin{minipage}[c]{.2\linewidth}
				\includegraphics[width=2.5cm]{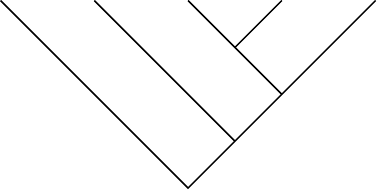}
			\end{minipage}
			$ \triangleleft $
			\begin{minipage}[c]{.2\linewidth}
				\includegraphics[width=2.5cm]{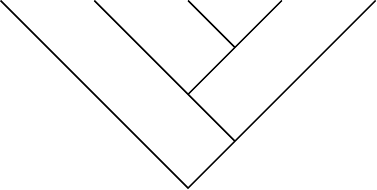}
			\end{minipage}
			$ \triangleleft $
			\begin{minipage}[c]{.2\linewidth}
				\includegraphics[width=2.5cm]{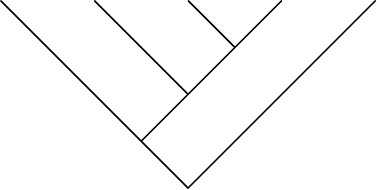}	
			\end{minipage}
			
			\bigbreak
			
			\begin{minipage}[c]{.4\linewidth}
				\begin{center}
					\includegraphics[width=3.5cm]{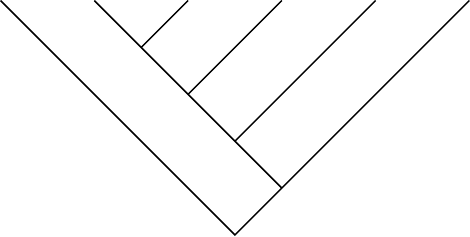}
				\end{center}
			\end{minipage}
			$ \leq $
			\begin{minipage}[c]{.4\linewidth}
				\begin{center}
					\includegraphics[width=3.5cm]{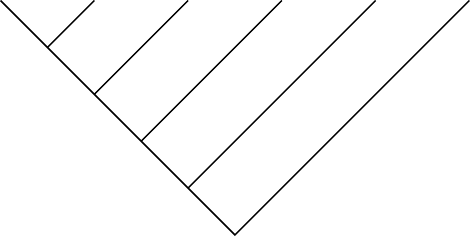}
				\end{center}
			\end{minipage}
			\caption{The intervals $ R_3 $ with all $ 4 $ elements (top) and $ L_4 $ (bottom).} 
			\label{fig:R3L4}
		\end{center}
	\end{figure}

	
	We now prove that there are four kinds of linear intervals in the Tamari lattice, namely trivial intervals, covering relations and "left" or "right" intervals.
	Left (resp. right) intervals are defined by first grafting trivial intervals on the leaves of an interval $ L_n $ (resp. $ R_n $) for some $ n \geq 2 $ and then grafting the result on some leaf of a trivial interval.
	As an interval, it is the product of trivial intervals and one linear interval, and thus, it is linear.
	It remains to prove that all linear intervals are of this form.
	
	\begin{figure}[h]
		\begin{center}
			\centering
			\begin{minipage}[c]{.28\linewidth}
				\begin{center}
					\includegraphics[width=3.5cm]{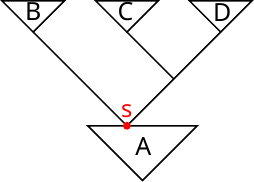}
				\end{center}
			\end{minipage}
			$ \triangleleft $
			\begin{minipage}[c]{.28\linewidth}
				\begin{center}
					\includegraphics[width=3.5cm]{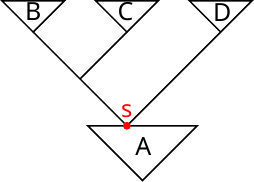}
				\end{center}
			\end{minipage}
			\caption{A covering relation.}
			\label{fig:coveringrelation}
		\end{center}
	\end{figure} 
	
	\begin{figure}[h]
		\begin{center}
			\centering
			\begin{minipage}[c]{.4\linewidth}
				\begin{center}
					\includegraphics[width=4.5cm]{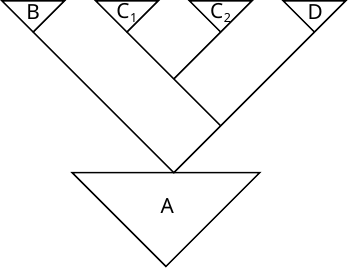}
				\end{center}
			\end{minipage}
			$ \leq $
			\begin{minipage}[c]{.4\linewidth}
				\begin{center}
					\includegraphics[width=4.5cm]{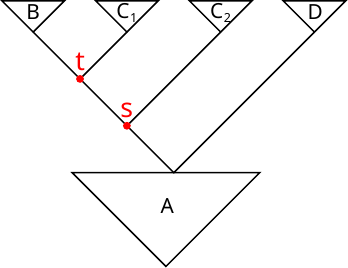}
				\end{center}
			\end{minipage}
			\caption{A left interval of height $ 2 $.}
			\label{fig:leftinterval}
		\end{center}
	\end{figure} 
	
	\begin{figure}[h]
		\begin{center}
			\centering
			\begin{minipage}[c]{.4\linewidth}
				\begin{center}
					\includegraphics[width=5cm]{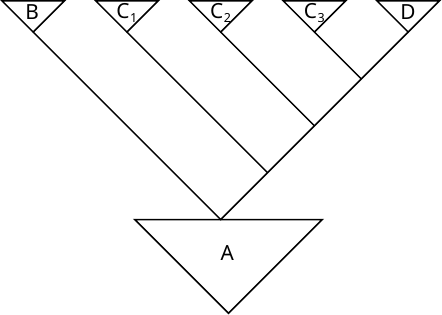}
				\end{center}
			\end{minipage}
			$ \leq $
			\begin{minipage}[c]{.4\linewidth}
				\begin{center}
					\includegraphics[width=5cm]{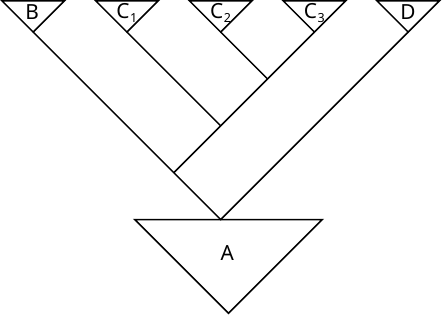}
				\end{center}
			\end{minipage}
			
			\caption{A right interval of height $ 3 $.}
			\label{fig:rightinterval}
		\end{center}
	\end{figure} 
	
	
	\begin{proposition}
		All linear intervals of height $ k \geq 2 $ are either left or right intervals.
	\end{proposition}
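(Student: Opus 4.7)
The plan is to reduce first to the case of new linear intervals and then classify those directly. By the operadic decomposition recalled in \cref{rem:grafting}, every interval in $\Tamn$ factors uniquely as a grafting of new intervals and is isomorphic as a poset to the Cartesian product of those factors. A product of posets is totally ordered if and only if exactly one factor is non-trivial and is itself a chain, so a linear interval of height $k \geq 2$ contains a unique non-trivial new sub-interval, which is itself a new linear interval of height $k$. The ambient interval is then obtained by grafting trivial intervals on the leaves of this new factor and grafting the result on a leaf of a trivial interval, which is precisely the definition of a left or right interval once the new factor is identified with $L_k$ or $R_k$. It therefore suffices to prove that every new linear interval of height $k \geq 2$ is either $L_k$ or $R_k$.

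To set up this classification, let $[P,Q]$ be a new linear interval of height $k \geq 2$ and let $P = P_0 \triangleleft P_1 \triangleleft \cdots \triangleleft P_k = Q$ denote its unique maximal chain, where each covering is a left rotation at some node. A useful invariant is the size $a_i$ of the left subtree of the root of $P_i$: a left rotation at the current root strictly increases $a$, while a rotation at any other node preserves it. If $a_P = a_Q$ then no root rotation occurs, the root of $P$ coincides with that of $Q$, and decomposing around this shared root produces a common non-root node in the sense of \cref{rem:grafting}, yielding a non-trivial grafting decomposition and contradicting newness. Hence $a_P < a_Q$, so at least one root rotation occurs. By the order-reversing mirror involution of the Tamari lattice, which exchanges $L_n$ with $R_n$, it suffices to analyze one of two symmetric sub-cases, which are expected to yield respectively the intervals $L_k$ and $R_k$.

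I expect the main obstacle to be pinning down the precise shape of $P$ and $Q$ in each sub-case. The cleanest route is probably induction on $k$, with base case $k = 2$ verified by direct enumeration in $\operatorname{Tam}_3$, in which only $L_2$ and $R_2$ are new linear intervals of height $2$. For the inductive step, one isolates either the first covering or the last covering of the chain as a root rotation, peels it off, and analyzes the remaining sub-interval of height $k - 1$, whose non-trivial new factor (after another application of the grafting decomposition) falls under the induction hypothesis. Any deviation of $P$ or $Q$ from the comb shapes characterizing $L_k$ and $R_k$ would either create a second maximal chain (contradicting linearity) or introduce a non-root common node between the endpoints (contradicting newness). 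Once the classification of new linear intervals is in hand, the proposition follows from the reduction in the first paragraph.
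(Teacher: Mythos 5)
Your opening reduction is sound and genuinely different from the paper's route: invoking the operadic decomposition of \cref{rem:grafting} to reduce the proposition to classifying \emph{new} linear intervals is a legitimate strategy (the paper works directly with arbitrary linear intervals and only deduces afterwards that $L_n$ and $R_n$ are the only new linear ones). The invariant $a_i$ given by the size of the left subtree of the root, and the conclusion that a new linear interval of height $k\geq 2$ must involve at least one root rotation, are also correct.

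The classification of new linear intervals, however --- which is the actual content of the proposition --- is never carried out, and there are two concrete gaps. First, your base case is insufficient as stated: checking that $L_2$ and $R_2$ are the only new linear intervals of height $2$ \emph{in} $\operatorname{Tam}_3$ does not exclude a new linear interval of height $2$ sitting inside $\operatorname{Tam}_m$ for some $m>3$. Nothing in your argument bounds the number of nodes of a new linear interval by its height plus one, and establishing that bound is essentially equivalent to the classification you are trying to prove. Second, the inductive step is a description of intentions (``one isolates\dots'', ``would either\dots''): the sentence claiming that any deviation from the comb shapes ``would either create a second maximal chain \dots or introduce a non-root common node'' is exactly the assertion that needs proof. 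The paper supplies it by a local case analysis of where the next rotation can occur relative to the previous one: a rotation inside one of the subtrees $A,B,C,D$ commutes with the previous rotation and produces a square (\cref{fig:square}); a second rotation at the same node produces a pentagon (\cref{fig:pentagon}); only the left successor and the predecessor of the previously rotated node remain, and these yield precisely the left and right shapes. Some version of this square/pentagon analysis is unavoidable in your approach as well, both to settle the base case in arbitrary size and to force the last rotation of the chain to continue the comb in the inductive step. Until it is supplied, what you have is a plausible plan rather than a proof.
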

	
	\begin{proof}
		We prove the result for $ k = 2 $ and then by induction.
		
		Suppose we have a linear interval of height $ 2 $, that is of the form $ P \triangleleft Q \triangleleft Q' $.
		Then we know that $ P \triangleleft Q $ is a covering relation at some node $ s $ as in \cref{fig:coveringrelation}, for some trees $ A, B, C, D $.
		More precisely, $ P $ (resp. $ Q $) can be described as a comb $ r_2 $ (resp. $ \ell_2 $) grafted on the $ k $-th leaf of a tree $ A $, with the trees $ B, C, D $ grafted from left to right on the leaves of $ r_2 $ (resp. $ \ell_2 $).
		
		Let us study the next possible rotations for the covering relation $ Q \triangleleft Q' $.
		As we can see in \cref{fig:square}, if we perform a rotation within $ A, B, C $ or $ D $, then we get a square, thus, not a linear interval.
		More precisely, if the next rotation $ Q \triangleleft Q' $ is such that $ Q' $ can be described as a comb $ \ell_2 $ grafted on the $ k $-th leaf of a tree $ A' $, with the trees $ B', C', D' $ grafted from left to right on the leaves of $ \ell_2 $, then we can define a tree $ P' $ as $ Q' $ where the left comb $ \ell_2 $ is replaced by a right comb $ r_2 $.
		Then we have $ P \triangleleft P' \triangleleft Q' $ and $ [P, Q'] $ is not linear.
		
		This being excluded, there remains at most three possible nodes for a rotation as in \cref{fig:possiblenodes}, namely the same node $ s $, its left successor $ t $ (if any), and its predecessor $ u $ (if $ s $ is a right son).
		
		If we perform another rotation at the node $ s $, we get a pentagon as in \cref{fig:pentagon}, thus not a linear interval.
		
		Otherwise, a rotation at the node $ t $ produces a left interval of height $ 2 $ as in \cref{fig:leftinterval} and a rotation at the node $ u $ produces a right interval of height $ 2 $ similar to the one in \cref{fig:rightinterval}, both being linear intervals.
		
		\medskip
		
		Let now $ [P,Q] $ be a linear interval of height $ k \geq 3 $. Let $ Q' $ be the lower cover of $ Q $ in $ [P,Q] $, such that $ [P,Q'] $ is a linear interval of height $ k-1 \geq 2 $.
		By induction, it is either a left of a right interval.
		
		\smallskip
		Assume $ [P,Q'] $ is a left interval.
		Then, the last rotation in $ [P,Q'] $ happens at a node $ s $ as in \cref{fig:leftinterval}.
		The node $ s $ is not a right son, so the only possible rotation to get a linear interval is at its left successor $ t $.
		Thus, $ [P,Q] $ is a left interval of height $ k $.
		
		\smallskip
		Assume $ [P,Q'] $ is a right interval, and the last rotation happens at a node $ s $, then a rotation to its left son (if possible) would produce a non linear interval as in \cref{fig:pentagon}.
		The only possible rotation is thus at the predecessor of $ s $ and $ [P,Q] $ is a right interval of height $ k $.
	\end{proof}
	
	\begin{figure}[h]
		\begin{center}
			\centering
			\begin{minipage}[c]{.4\linewidth}
				\begin{center}
					\includegraphics[width=4cm]{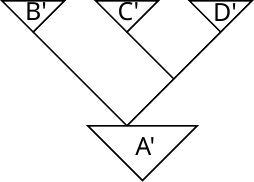}
				\end{center}
			\end{minipage}
			$ \leq $
			\begin{minipage}[c]{.4\linewidth}
				\begin{center}
					\includegraphics[width=4cm]{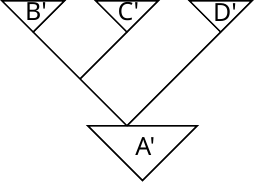}
				\end{center}
			\end{minipage}
			
			\bigbreak
			
			\begin{minipage}[c]{.45\linewidth}
				\begin{center}
					\rotatebox[origin=c]{90}{$\leq$}
				\end{center}
			\end{minipage}
			\begin{minipage}[c]{.45\linewidth}
				\begin{center}
					\rotatebox[origin=c]{90}{$\leq$}
				\end{center}
			\end{minipage}
			\bigbreak
			\begin{minipage}[c]{.4\linewidth}
				\begin{center}
					\includegraphics[width=4cm]{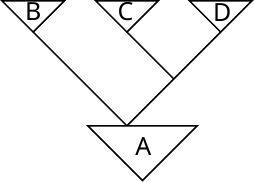}
				\end{center}
			\end{minipage}
			$ \leq $
			\begin{minipage}[c]{.4\linewidth}
				\begin{center}
					\includegraphics[width=4cm]{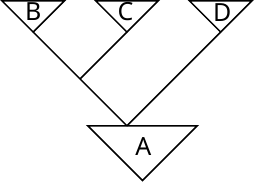}
				\end{center}
			\end{minipage}
			
			\caption{Non linear interval.}
			\label{fig:square}
		\end{center}
	\end{figure} 
	
	\begin{figure}[h]
		\centering
		\includegraphics[width=5cm]{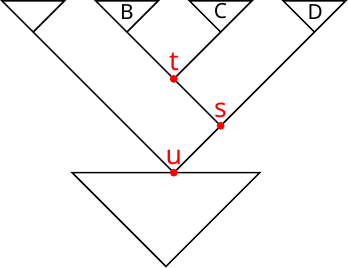}
		\caption{The tree $ Q $ after rotation at $ s $.}
		\label{fig:possiblenodes}
	\end{figure}
	
	\begin{figure}[h]
		\begin{center}
			\centering
			\begin{minipage}[c]{.3\linewidth}
				\begin{center}
					\includegraphics[width=3cm]{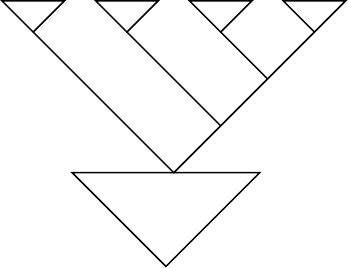}
				\end{center}
			\end{minipage}
			$ \leq $
			\begin{minipage}[c]{.3\linewidth}
				\begin{center}
					\includegraphics[width=3cm]{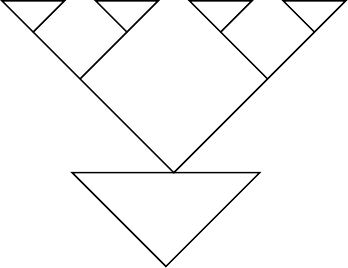}
				\end{center}
			\end{minipage}
			$ \leq $
			\begin{minipage}[c]{.3\linewidth}
				\begin{center}
					\includegraphics[width=3cm]{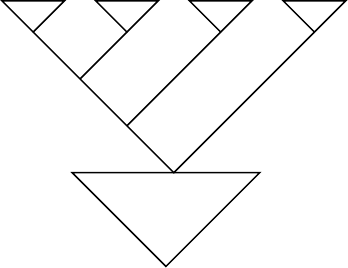}
				\end{center}
			\end{minipage}
			
			\bigbreak
			
			\begin{minipage}[c]{.45\linewidth}
				\begin{center}
					\rotatebox[origin=c]{-45}{$\leq$}
				\end{center}
			\end{minipage}
			\begin{minipage}[c]{.45\linewidth}
				\begin{center}
					\rotatebox[origin=c]{45}{$\leq$}
				\end{center}
			\end{minipage}
			
			\bigbreak
			\begin{minipage}[c]{.3\linewidth}
				\begin{center}
					\includegraphics[width=3cm]{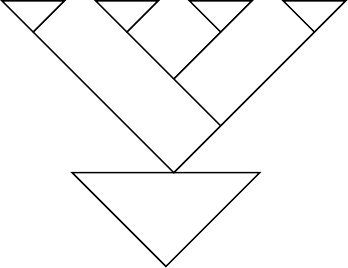}
				\end{center}
			\end{minipage}
			$ \leq $
			\begin{minipage}[c]{.3\linewidth}
				\begin{center}
					\includegraphics[width=3cm]{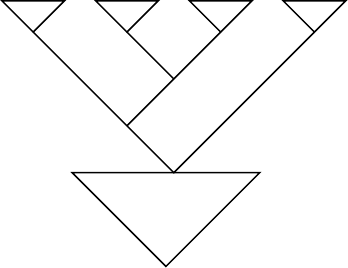}
				\end{center}
			\end{minipage}
			
			\caption{Two consecutive rotations at the same node produce a non linear interval.}
			\label{fig:pentagon}
		\end{center}
	\end{figure} 
	
	We have proven that all linear intervals of height $ k \geq 1 $ decompose as an interval $ L_k $ or $ R_k $, grafted on some tree $ A $ with some trees grafted on the leaves of $ L_k $ or $ R_k $.
	Thus, as long as $ A $ or one of the trees we grafted on $ L_k $ or $ R_k $ is a non trivial tree, we get an interval which is not new.
	Consequently, $ L_n $ and $ R_n $ are the only linear intervals that are new in $  \operatorname{Tam}_{n+1}$.
	
	\begin{remark} \label{rem:maxheightlinear}
		A linear interval of height $ k \geq 1 $ has at least $ k+1 $ nodes.
		Thus, for $ n \geq 1 $ there are no linear interval of height $ k \geq n $ in $ \Tamn $.
	\end{remark}
	
	\subsection{Combinatorial description of linear intervals}
	
	We have described the structure of all linear intervals according to their height.
	We now deduce a combinatorial description of them in order to count them.
	
	A linear interval of height $ 0 $ is of the form $ [P,P] $ where $ P $ is a tree not reduced to a leaf.
	
	A linear interval of height $ 1 $ is a covering relation.
	As we can see in \cref{fig:coveringrelation}, the part with $ A, B $ and $ D $ can be understood as a tree with a marked node $ s $, together with another tree $ C $ that we plug into the right edge out of $ s $ for the bottom element and into the left edge out of $ s $ for the top element. Note that $ A, B, C $ and $ D $ might be just leaves.
	
	Similarly, as in \cref{fig:rightinterval,fig:leftinterval}, a linear interval of height $ k \geq 2 $ can be understood as a tree formed of $ A, B $ and $ D $, with a marked node $ s $ and a direction (left or right), together with a sequence of $ k $ trees $ C_1, \dots, C_k $.
	Those $ k $ trees are then plugged into the selected edge going out of $ s $ or on a common branch then plugged into the other edge out of $ s $.
	
	This enables us to write down equations on the generating series of linear intervals.
	Let $ A(t) $ be the generating series of trees, including the degenerate case of the tree reduced to a leaf.
	The standard decomposition of the trees along the left side for instance gives the equation $ A = 1 + tA^2 $, and the number of trees with $ n $ nodes is the Catalan number $ \displaystyle C_n = \frac{1}{n+1} \binom{2n}{n} $. Moreover, the generating series of trees with a marked node is equal to $ tA'(t) $ and there are $ \displaystyle n C_n = \frac{n}{n+1} \binom{2n}{n} $ of them.
	
	For $ k \geq 0 $, let $ S_k(t) $ be the generating series of linear intervals of height $ k $.
	Since we did not consider the tree reduced to a leaf for the Tamari lattice, we have:
	\begin{equation*}
		S_0 = A - 1.
	\end{equation*}
	Then, a covering relation is given by a marked tree and another tree that we connect to it, which creates an additional node, so we can write:
	\begin{equation*}
		S_1 = (tA')  (tA) = t^2 A' A.
	\end{equation*}
	Similarly, for $ k \geq 2 $, we have the following equation:
	\begin{equation*}
		S_k =  (2tA') (tA)^k.
	\end{equation*}
	Here, the factor $ 2 $ is for the choice of the direction, and we plug $ k $ trees, creating one node for each of them.
	
	\subsection{Counting of linear intervals}
	
	In this section, we use Lagrange inversion \cite[ch. 5]{stanley} to get the coefficients of $ S_k $. Let us introduce the series $ B = A - 1 $ of non trivial trees. We can then write $ B = t (B+1)^2 = t F(B)$, where $ F(x) = (x+1)^2 $.     
	Then, it follows that
	
	\begin{equation*}
		B' = \dfrac{(B+1)^3 }{1-B}.
	\end{equation*}
	
	For $ k \geq 2 $, we can write: 
	\begin{equation*}
		S_1 = t^2 B' (B+1) = t^2 \phi_1(B) \quad \text{and} \quad
		S_k = 2 t^{k+1} B' (B+1)^k = 2 t^{k+1} \phi_k(B),
	\end{equation*} 
	where $ \phi_k(x) = \dfrac{(1+x)^{k+3}}{1-x}  $ for $  k \geq 1 $.
	
	In what follows, we use the notation $ [t^n] S $ to denote the coefficient of degree $ n $ in the series $ S(t) $.
	As $ \phi_k(x) $ is a series in $ x $ with a constant term equals to $ 1 $ and $ B(t) $ is a series in $ t $ with no constant term, we have $ [t^0] \phi_k(B) = 1$.
	
	We can write $\displaystyle \dfrac{1}{1-x} = \sum\limits_{k \geq 0} x^k$ and $\displaystyle \dfrac{1}{(1-x)^2} = \sum\limits_{k \geq 0} (k+1)x^k$.
	Let us compute $ [t^n] \phi_k(B)$, using Lagrange inversion, for $ n \geq 1 $:
	
	\begin{align*}
		[t^n] \phi_k(B) & = \frac{1}{n} [x^{n-1}] \phi_k'(x) F(x)^n\\
		& = \dfrac{k+3}{n} [x^{n-1}] \dfrac{(1+x)^{k+2+2n}}{1-x}  + \dfrac{1}{n}[x^{n-1}] \dfrac{(1+x)^{k+3+2n}}{(1-x)^2}\\
		& = \dfrac{k+3}{n} \sum_{j=0}^{n-1} \binom{k+2+2n}{j}  + \dfrac{1}{n} \sum_{j=0}^{n-1} \binom{k+3+2n}{j} (n-j-1+1).
	\end{align*}
	After a few lines of computation, we obtain that $ \displaystyle [t^n] \phi_k(B) = \binom{k+2+2n}{n} $.
	
	We can notice that this formula is still true for $ n = 0 $.
	
	\begin{theorem}
		In the Tamari lattice $ \Tamn $ of size $ n $, there are:
		\begin{itemize}
			\item $ \displaystyle \frac{1}{n+1} \binom{2n}{n} $ linear intervals of height $ 0 $,
			\item  $\displaystyle \binom{2n-1}{n-2}$ linear intervals of height $ 1 $,
			\item  $\displaystyle 2 \binom{2n-k}{n-k-1}$ linear intervals of height $ k $, for $ 2 \leq k < n $.
		\end{itemize}
		Furthermore, there are  no linear interval of height $ k \geq n $.
		
		This adds up to $\displaystyle \frac{1}{n+1} \binom{2n}{n} + \binom{2n-1}{n-2} + 2 \binom{2n-1}{n+2}$ linear intervals in $ \Tamn $.
		
	\end{theorem}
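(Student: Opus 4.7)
The plan is to read off the coefficients directly from the generating-series identities already established. The height-$0$ count is immediate: trivial intervals are in bijection with the non-leaf trees of $Y_n$, and there are $C_n = \frac{1}{n+1}\binom{2n}{n}$ of those. The vanishing statement for $k \geq n$ is already handled by \cref{rem:maxheightlinear}, which bounds the number of nodes in any linear interval of height $k$ by at least $k+1$.

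For heights $1$ and $k \geq 2$, I would proceed entirely generating-functionally. Setting $B = A - 1$ to peel off the empty tree gives $B = t F(B)$ with $F(x) = (1+x)^2$, exactly the Lagrange-inversion setup. The identities $S_1 = t^2 \phi_1(B)$ and $S_k = 2 t^{k+1} \phi_k(B)$, with $\phi_k(x) = (1+x)^{k+3}/(1-x)$, reduce the whole computation to extracting $[t^n] \phi_k(B)$ for $n \geq 0$ and $k \geq 1$.

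For $n \geq 1$, Lagrange inversion gives $[t^n]\phi_k(B) = \tfrac{1}{n}[x^{n-1}]\phi_k'(x) F(x)^n$. Differentiating $\phi_k$ and using $F(x)^n = (1+x)^{2n}$ turns this into two explicit binomial sums, which I would simplify via Vandermonde/hockey-stick manipulations to the closed form $[t^n]\phi_k(B) = \binom{k+2+2n}{n}$. Checking that this also holds at $n=0$ (where both sides equal $1$) allows me to shift indices uniformly: $[t^n] S_1 = [t^{n-2}] \phi_1(B) = \binom{2n-1}{n-2}$ and, for $k \geq 2$, $[t^n] S_k = 2 [t^{n-k-1}] \phi_k(B) = 2 \binom{2n-k}{n-k-1}$, matching the three bullets of the theorem.

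The main obstacle is the algebraic simplification of the Lagrange sum: it is elementary but must be organized so that the factor of $\tfrac{1}{n}$ absorbs cleanly and the compact form $\binom{k+2+2n}{n}$ emerges without residue. The final total is then obtained by summing the three bullets and applying hockey-stick to $\sum_{k=2}^{n-1} \binom{2n-k}{n-k-1}$ (substitute $j = n-k-1$ and use symmetry of binomial coefficients) to recover $\binom{2n-1}{n+2}$, yielding the closed form $\frac{1}{n+1}\binom{2n}{n} + \binom{2n-1}{n-2} + 2\binom{2n-1}{n+2}$.
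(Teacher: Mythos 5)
Your proposal is correct and follows essentially the same route as the paper: the height-$0$ count as the number of elements, the vanishing for $k\geq n$ via the node-count bound, Lagrange inversion applied to $B=t(1+B)^2$ with $\phi_k(x)=(1+x)^{k+3}/(1-x)$ to get $[t^n]\phi_k(B)=\binom{k+2+2n}{n}$, the same coefficient shifts, and the same hockey-stick summation for the total. There is no substantive difference in method or in any step.
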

	
	\begin{proof}
		For the intervals of height $ 0 $, this is the number of elements in $ \Tamn $.
		
		As stated in \cref{rem:maxheightlinear}, we already know that any linear interval in $ \Tamn $ is of height at most $ n-1 $.
		Thus, we can fix $ n > k > 0 $ and use the previous results to get the number of intervals of height $ k $ in $ \Tamn $.
		
		We have  $\displaystyle [t^n] \phi_k(B) = \binom{k+2+2n}{n} $. Thus, we get:
		\begin{equation*}
			[t^n] t^{k+1} \phi_k(B)  = \binom{2n-k}{n-k-1}. 
		\end{equation*}
		Now, for $ k = 1 $, we have $ S_1 = t^2 \phi_1(B) $ thus there are $\displaystyle \binom{2n-1}{n-2}$ intervals of height $ 1 $ in $ \Tamn $.
		
		For $ 2 \leq k < n$, we have  $ S_k = 2t^{k+1} \phi_k(B) $, and there are $\displaystyle 2 \binom{2n-k}{n-k-1}$ intervals of height $ k $.
		
		Finally, we have $\displaystyle \sum_{k=2}^{n-1} \binom{2n-k}{n+1} = \sum_{k=0}^{n-3} \binom{n+1+k}{n+1} = \binom{2n-1}{n+2}$.
		This can be proven combinatorially, as a particular case of the identity $\displaystyle \sum_{k=0}^b \binom{a+k}{a} = \binom{a+b+1}{b}$.
	\end{proof}

	\section{Linear intervals in the Dyck lattice} \label{sec:Dyck}
	
	\subsection{Structure of linear intervals}
	
	As in the Tamari lattice, we will first define a family of "left" and "right" intervals that will be linear, then we will show that all linear intervals are either trivial, covering relations, left or right intervals.
	A left (resp. right) interval is an interval $ [P,Q] $ where the Dyck word $ Q $ is obtained from $ P $ by changing a subword $ d^k u $ into $ u d^k $ (resp $ d u^k $ into $ u^k d $) for some $ k \geq 2 $.
	
	Such an interval is indeed linear of height $ k $ since at every step, there is only one valley that can be changed into a peak such that the path remains under $ Q $.
	Note that left and right intervals are exchanged by the mirror involution on Dyck paths.
	
	\begin{figure}[h]
		\begin{center}
			\centering
			\includegraphics[width=8cm]{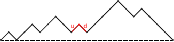}
			
			\rotatebox[origin=c]{90}{$\triangleleft$}
			
			\includegraphics[width=8cm]{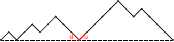}
			\caption{A covering relation in the Dyck lattice.}
			\label{fig:Dyckcoveringrelation}
		\end{center}
	\end{figure}

	\begin{proposition}
		All linear intervals of height $ k \geq 2 $ in the Dyck lattice are either left or right intervals.
	\end{proposition}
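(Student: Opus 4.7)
The plan is to mimic the Tamari-case proof: the base case $k=2$ is handled by direct analysis of what covering relations can follow a given one, and for $k\geq 3$ one uses induction on the height. In both steps, the engine is a ``square argument'' ruling out any second covering relation that commutes with the first, since two commuting swaps would produce a four-element square sub-interval, violating linearity.

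For the base case, suppose $P \triangleleft R \triangleleft Q$ is a chain of height $2$ and $P \triangleleft R$ swaps the valley $du$ at positions $(i,i+1)$. I would catalog the valleys of $R$ versus those of $P$: aside from losing the valley at $(i,i+1)$, the only possible new valleys of $R$ lie at $(i-1,i)$ (when $P_{i-1}=d$) and at $(i+1,i+2)$ (when $P_{i+2}=u$), and all other valleys of $R$ are inherited from $P$. If $R \triangleleft Q$ swapped any inherited valley $v$, then swapping $v$ directly in $P$ would give some $P'$ with $P \triangleleft P' \triangleleft Q$, producing a second maximal chain of $[P,Q]$ and contradicting linearity. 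Hence $R \triangleleft Q$ must swap one of the two new valleys, which yields precisely the patterns $d^{2}u \mapsto u d^{2}$ (a left interval of height $2$) or $d u^{2} \mapsto u^{2}d$ (a right interval of height $2$).

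For the inductive step with $k \geq 3$, let $R$ be the lower cover of $Q$ in $[P,Q]$, so that $[P,R]$ is linear of height $k-1\geq 2$ and, by induction, is a left or right interval. Using the mirror involution on Dyck paths, which is an order-preserving involution of the Dyck lattice, it suffices to treat the case where $[P,R]$ is a left interval. Write $P = \ldots \alpha\, d^{k-1}u\, \beta \ldots$ and $R = \ldots \alpha\, u d^{k-1}\, \beta \ldots$, and let $R_{k-2}$ be the lower cover of $R$ in $[P,R]$, so that $R_{k-2} = \ldots \alpha\, d u d^{k-2}\, \beta \ldots$. Comparing the valleys of $R$ and $R_{k-2}$ inside this block, the only valley of $R$ that is not already a valley of $R_{k-2}$ sits at the left boundary $(\alpha,u)$ and exists exactly when $\alpha = d$. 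The same square argument, now applied to $R_{k-2}\triangleleft R \triangleleft Q$, forces $R \triangleleft Q$ to swap this unique new valley, so $\alpha = d$; but then the subword $d^{k-1}u$ in $P$ extends to $d^{k}u$, and $Q$ contains $ud^{k}$ in its place, showing that $[P,Q]$ is a left interval of height $k$.

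The main delicate point is the local bookkeeping of valleys: one must verify that whenever a candidate valley $v$ of $R$ is shared with $P$ (in the base case) or with $R_{k-2}$ (in the induction), it lies strictly outside the two positions modified by the previous swap, so that swapping $v$ first and then the previous pair genuinely yields a valid alternative chain with the same top element $Q$. Once this routine verification is in hand, the rest of the argument is a short case analysis on the three-letter neighborhoods around the swap.
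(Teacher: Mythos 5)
Your proposal is correct and follows essentially the same route as the paper: the base case $k=2$ is settled by classifying the (at most two) new valleys created by the first swap and ruling out inherited valleys via the commuting-square argument, and the inductive step forces the last covering relation to extend the pattern $d^{k-1}u \mapsto ud^{k-1}$ (or its mirror) at its boundary. The only cosmetic difference is that you invoke the mirror involution to treat a single case in the induction where the paper handles left and right symmetrically by hand; the valley bookkeeping you flag as the delicate point is exactly the verification the paper performs.
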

	
	\begin{proof}
		We prove the result for $ k = 2 $ and then by induction.
		
		Suppose we have a linear interval of height $ 2 $, that is of the form $ P \triangleleft  Q \triangleleft R $.
		Then the Dyck word $ Q $ is obtained from $ P $ by transforming a valley $ du $ into a peak $ ud $.
		
		If the next covering relation uses the last step $ d $ of the peak of $ Q $ we just produced, then the valley of $ P $ was followed by an up step and $ R $ is obtained from  $ P $ by changing a subword $ duu $ into $ uud $.
		Thus, $ [P,R] $ is a right interval of height $ 2 $.
		
		Similarly, if the next covering relation uses the first step $ u $ of the peak of $ Q $ we just produced, then the valley of $ P $ was preceded by a down step and $ R $ is obtained from  $ P $ by changing a subword $ ddu $ into $ udd $.
		Thus, $ [P,R] $ is a left interval of height $ 2 $.
		
		If the next covering relation happens at a valley somewhere else in $ Q $, then this valley exists in $ P $ and the two covering relations can be performed independently, thus $ [P, R] $ would be a square as shown in \cref{fig:Dycksquare} and not a linear interval. 
		
		\begin{figure}[h]
			\begin{center}
				\centering
				\begin{minipage}[c]{.45\linewidth}
					\begin{center}
						\includegraphics[width=5cm]{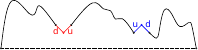}
					\end{center}
				\end{minipage}
				$ \leq $
				\begin{minipage}[c]{.45\linewidth}
					\begin{center}
						\includegraphics[width=5cm]{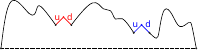}
					\end{center}
				\end{minipage}
				
				\bigbreak
				
				\begin{minipage}[c]{.45\linewidth}
					\begin{center}
						\rotatebox[origin=c]{90}{$\leq$}
					\end{center}
				\end{minipage}
				\begin{minipage}[c]{.45\linewidth}
					\begin{center}
						\rotatebox[origin=c]{90}{$\leq$}
					\end{center}
				\end{minipage}
				\bigbreak
				\begin{minipage}[c]{.45\linewidth}
					\begin{center}
						\includegraphics[width=5cm]{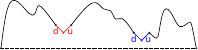}
					\end{center}
				\end{minipage}
				$ \leq $
				\begin{minipage}[c]{.45\linewidth}
					\begin{center}
						\includegraphics[width=5cm]{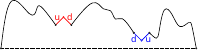}
					\end{center}
				\end{minipage}
				
				\caption{A square in the Dyck lattice.}
				\label{fig:Dycksquare}
			\end{center}
		\end{figure} 
		
		\smallskip
		Let now $ [P,Q] $ be a linear interval of height $ k+1 \geq 3 $, and $ Q' $ be the lower cover of $ Q $ in $ [P,Q] $.
		Then $ [P,Q'] $ is a linear interval of height $ k \geq 2 $.
		By induction it is either a left or right interval.
		
		\smallskip
		If $ [P,Q'] $ is a left interval, then $ Q' $ is obtained from $ P $ by changing a subword $ d^k u $ into $ u d^k $.
		Furthermore, in the chain from $ P $ to $ Q' $, the peak created in $ Q' $ is formed by the two first steps of this subword $ u d^k $, which is not followed by an up step in $ Q' $.
		Thus, the covering relation $ Q' \triangleleft Q $ has to use this first up step $ u $ of this subword $ u d^k $, as otherwise the interval would not be linear.
		Then $ Q $ is obtained from $ P $ by changing a subword $ d^{k+1} u $ into $ u d^{k+1} $ and $ [P,Q] $ is a left interval of height $ k+1 $.
		
		\smallskip
		If $ [P,Q'] $ is a right interval, then $ Q' $ is obtained from $ P $ by changing a subword $ d u^k $ into $ u^k d$.
		Symmetrically, the covering relation $ Q' \triangleleft Q $ has to use this last down step $ d $ and we get that $ [P,Q] $ is a right interval of height $ k+1 $.
	\end{proof}
	
	\subsection{Combinatorial description of linear intervals}
	
	We have described the structure of all linear intervals according to their height.
	As for the Tamari lattice, we will give a combinatorial description of them in order to write some equations.
	In fact, we will produce the very same equations as for the Tamari lattice, and thus we will prove that the two lattices have the same number of linear intervals of any fixed height.
	
	For $ k \geq 0 $, let $ T_k(t) $ be the generating series of linear intervals of height $ k $ in the Dyck lattices.
	
	\begin{proposition}
		For any $ k \geq 0 $, the generating series $ T_k(t) $ of linear intervals of height $ k $ in the Dyck lattices is equal to the generating series $ S_k(t) $ of linear intervals of height $ k $ in the Tamari lattices.
	\end{proposition}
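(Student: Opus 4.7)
The plan is to establish the proposition by deriving, for each height $k \geq 0$, a functional equation for $T_k(t)$ that matches the one already obtained for $S_k(t)$ in the Tamari case. For $k = 0$, this is immediate since trivial intervals in $\Dyckn$ and $\Tamn$ are both indexed by Catalan objects, so $T_0 = A - 1 = S_0$. For $k = 1$, a covering relation is a Dyck path carrying a distinguished valley $du$; for $k \geq 2$, a left (resp.\ right) interval of height $k$ is a Dyck path carrying a distinguished factor $d^k u$ (resp.\ $du^k$). In each case, counting linear intervals reduces to enumerating Dyck paths with a marked subword of the prescribed shape.

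For $k = 1$, I would decompose a valley-marked Dyck path $W$ according to its first return to level zero, writing $W = u W_1 d W_2$ with $W_1$ and $W_2$ arbitrary Dyck paths. The marked valley lies either entirely in $W_1$, entirely in $W_2$, or straddles the interface; in the straddling case, because no down step can follow the closing $d$ of the first excursion, the configuration is forced to be $W = u W_1 d \cdot u W_3 d W_4$ with the central $du$ as the marked valley. This yields the equation $T_1 = 2tAT_1 + t^2 A^3$, which solves to $T_1 = t^2 A^3/(1 - 2tA)$. Differentiating the Catalan equation $A = 1 + tA^2$ gives $A' = A^2/(1-2tA)$, and hence $T_1 = t^2 A'A = S_1$.

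For $k \geq 2$, the mirror involution on Dyck paths exchanges left and right intervals of the same height, so it suffices to count left intervals and double. Applying the same first-return decomposition to a Dyck path with a marked factor $d^k u$, the straddling case forces the first $k-1$ down steps of the factor to lie at the end of $W_1$, the $k$-th down step to be the closing $d$ of the first excursion, and the up step to be the first step of $W_2 = u W_3 d W_4$. Thus $W_1 = W_1' d^{k-1}$ with $W_1'$ an arbitrary nonnegative lattice path from height $0$ to height $k-1$. I would compute the generating series of such prefixes, counted by their number of up steps, via the standard ladder decomposition $W_1' = D_0 u D_1 u \cdots u D_{k-1}$ with each $D_i$ a Dyck path, yielding $t^{k-1} A^k$. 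The recurrence becomes $L_k = 2tAL_k + t^{k+1} A^{k+2}$, which solves to $L_k = t^{k+1} A' A^k$, and hence $T_k = 2L_k = 2t^{k+1}A'A^k = S_k$.

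The main obstacle is ensuring that the case analysis in the first-return decomposition is exhaustive and that no subtler straddling configuration is overlooked: one must verify that a contiguous factor $d^k u$ crossing the first return to zero can only do so in the single manner described. This follows from the observation that immediately after a return to level zero the next step, if any, must be an up step, so the factor's run of down steps cannot extend past the closing $d$ of the first excursion. Once this structural point is in place, the remainder is formal manipulation of generating series that mirrors the Tamari computation line for line.
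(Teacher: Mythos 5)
Your argument is correct, but it takes a genuinely different route from the paper. You both start from the same structural facts (trivial intervals and covering relations are Dyck paths with nothing or a marked valley; linear intervals of height $k\geq 2$ are exactly the paths with a marked factor $d^ku$ or $du^k$, the two families being exchanged by the mirror involution), but from there the paper gives a \emph{direct} bijection: using the matching of up and down steps, a covering relation is encoded as a path with a marked down step together with one inserted Dyck word (giving $T_1=(tA')(tA)$ at once), and a right interval of height $k$ as a path with a marked down step together with $k$ inserted Dyck words (giving $tA'(tA)^k$), so the series $S_k$ and $T_k$ satisfy literally the same product formula with no equation to solve. You instead apply the first-return decomposition $W=uW_1dW_2$ to the marked path, check that the marked factor either sits inside $W_1$, inside $W_2$, or straddles the first return in the unique forced way (your justification of this trichotomy is the right one: after a return to level zero the next letter must be $u$, and $W_2$ cannot begin with a $d$, so the only straddling position puts the final $u$ of the factor at the start of $W_2$ and forces $W_1=W_1'd^{k-1}$), and you obtain the linear equation $L_k=2tAL_k+t^{k+1}A^{k+2}$, solved via $A'=A^2/(1-2tA)$. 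Your kernel-type computation is a perfectly valid and somewhat more self-contained way to reach $T_k=2t^{k+1}A'A^k=S_k$; what it gives up is the explicit bijection with a marked path and a sequence of $k$ Dyck words, which is precisely the decomposition the paper reuses verbatim in \cref{sec:altTamari} to handle all alt-Tamari posets uniformly, so your method would need to be re-examined before it could replace the paper's in that more general setting.
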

	
	\begin{proof}
		First, let recall that the Dyck paths of size $ n $ are in bijection with the rooted planar binary trees of size $ n $.
		Thus, the generating series $ A(t) $ of binary trees is also the generating series of Dyck paths.
		Similarly, as every Dyck path of size $ n $ has $ n $ down (resp. up) steps, the generating series of Dyck paths marked at a down (resp. up) step is equal to $ t A' (t)$.
		
		As in the Tamari lattice, a linear interval of height $ 0 $ is of the form $ [P,P] $ with $ P $ a non trivial Dyck path.
		Thus, we can write:
		\begin{equation*}
			T_0 = A - 1.
		\end{equation*}
		
		Let $ [P,Q] $ be a linear interval of height $ 1 $, \textit{i.e.} a covering relation.
		Then $ Q $ is obtained from $ P $ by changing a valley $ du $ into a peak $ ud $.
		Let $ uDd $ be the excursion whose first step is the up step of this valley of $ P $.
		Then $ D $ as a word is a Dyck word.
		
		In fact, as we can see in \cref{fig:Dyckcoveringcombinatorial}, any covering relation $ P \triangleleft Q $ can be understood as a path $ P $ with a marked down step $ \# $, before which we insert $ duD $ for the bottom element $ P $ and $ udD $ for the top element $ Q $.
		We have:
		\begin{equation*}
			T_1 = tA' tA = t^2 A' A.
		\end{equation*}
		
		\begin{figure}[h]
			\begin{center}
				\centering		
				\includegraphics[width=8cm]{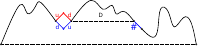}
				\caption{Decomposition of a covering relation in the Dyck lattice.}
				\label{fig:Dyckcoveringcombinatorial}
			\end{center}
		\end{figure} 
		
		\begin{figure}[h]
			\begin{center}
				\centering		
				\includegraphics[width=8cm]{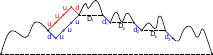}
				\caption{Decomposition of a right interval of height $ 3 $ in the Dyck lattice.}
				\label{fig:Dyckrightdecomposition}
			\end{center}
		\end{figure} 
		
		Similarly, as in \cref{fig:Dyckrightdecomposition}, let $ [P,Q] $ be a right interval of height $ k \geq 2 $. Then, $ Q $ is obtained from $ P $ by changing a subword $ du^k $ to $ u^kd $.
		Let us denote $ d_1, \dots, d_k$ the down steps of $ P $ matching with these $ k $ up steps.
		Then, in $ P $ we have a subword of the form $ du^k D_1 d_1 ... D_k d_k $, where $ D_1, \dots, D_k $ are $ k $ Dyck words, and in $ Q $, we have instead the subword  $ u^k d D_1 d_1 ... D_k d_k $.
		
		Thus, any right interval of height $ k \geq 2 $ can be understood as a Dyck path with a marked down step $ d_k $, before which we insert $ du^k D_1 d ... d D_k$ for the bottom element and $ u^k d D_1 d ... d D_k$ for the top element, with $ k $ Dyck words $ D_1, \dots, D_k $.
		
		Symmetrically, any left interval of height $ k \geq 2 $ can be understood as a Dyck path with a marked up step $ u _1$, after which we insert $  D_1 u ... u D_k d^k u$ for the bottom element and $ D_1 u ... u D_k u d^k$ for the top element, with $ k $ Dyck words $ D_1, \dots, D_k $.
		
		Thanks to this, for any $ k \geq 2 $, we finally have:
		\begin{equation*}
			T_k = tA' (tA)^k + tA' (tA)^k = 2 t^{k+1}A' A^k.
		\end{equation*}
		
		We have thus proven that for any $ k \geq 0 $, $ S_k $ and $ T_k $ satisfy the same equation, and are indeed equal.
	\end{proof}
	
	As a corollary we have the following result:
	
	\begin{theorem}
		For any $ n \geq 1 $ and $ k \geq 0 $, the Tamari lattice $ \Tamn $ and the Dyck lattice $ \Dyckn $ have the same number of linear intervals of height $ k $.
	\end{theorem}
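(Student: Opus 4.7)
The plan is to extract this result directly from the preceding proposition, which is where the real work lives. That proposition establishes an equality $S_k(t) = T_k(t)$ of generating series, so the theorem reduces to comparing the coefficient of $t^n$ on each side.

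First I would recall that $S_k(t)$ (resp. $T_k(t)$) was defined so that $[t^n] S_k(t)$ equals the number of linear intervals of height $k$ in $\Tamn$ (resp. $\Dyckn$); this is simply unpacking the definition of the generating series. Then I would invoke the preceding proposition which gives the equality $S_k(t) = T_k(t)$ as formal power series in $t$. Extracting the coefficient of $t^n$ from both sides yields immediately that the number of linear intervals of height $k$ in $\Tamn$ equals the number of linear intervals of height $k$ in $\Dyckn$.

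There is no real obstacle left at this stage: all the combinatorial content has already been absorbed into the structural decompositions of linear intervals in $\Tamn$ and $\Dyckn$ (the classification into trivial intervals, covering relations, and left/right intervals, together with the combinatorial descriptions as a marked Catalan object plus a sequence of Catalan objects). The striking feature worth emphasizing is that although the decompositions on each side look different — rotations at nodes of binary trees versus flipping valleys into peaks on Dyck paths — both give rise to the same functional form in $A$ and $tA'$, namely $S_0 = T_0 = A-1$, $S_1 = T_1 = t^2 A' A$, and $S_k = T_k = 2 t^{k+1} A' A^k$ for $k \geq 2$. Because the Catalan generating series $A(t)$ is the same on both sides (Dyck paths and binary trees being equinumerous), the coefficient-wise equality is automatic. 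I would close by remarking that this equality of counts is not induced by any natural bijection preserving containment, since a linear interval in $\Tamn$ is typically not linear in $\Dyckn$; the coincidence is purely at the level of counting.
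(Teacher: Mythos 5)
Your proposal is correct and matches the paper exactly: the paper states this theorem as an immediate corollary of the preceding proposition $S_k(t) = T_k(t)$, and extracting the coefficient of $t^n$ is all that is needed. Your additional remarks about the shared functional form and the absence of a containment-preserving bijection are accurate but not required for the deduction.
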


	\section{Linear intervals in the alt-Tamari posets} \label{sec:altTamari}
	
	One can define a family of posets on the set of Dyck paths $ Z_n $ for every $ n \geq 1 $, which includes the Tamari and the Dyck lattices that we introduced before.
	We call them the alt-Tamari posets.
	More precisely, they depend on an increment function $ \delta \colon \{1, \dots, n\} \to \{0,1\}$, and will be denoted $ \Deltamn $.
	We will have a result of refinement whenever two functions $ \delta $ and $ \delta' $ are comparable.
	Moreover, in all these posets, we can define trivial intervals, covering relations, left and right intervals and prove that all linear intervals are of this kind.
	We can then give a combinatorial description of these intervals.
	It follows that we have a bijection between linear intervals of any two alt-Tamari {\color{green}posets} of the same size which preserves the height.
	This proves the main result of this part, namely the \cref{thm:maintheorem} which states that all these posets have the same number of linear intervals, even when distinguished according to their height.
	
	\subsection{Definition of the alt-Tamari posets} \label{sec:alttamaridef}
	
	Given a Dyck path of size $ n $, we will number its up steps with integers $ \{1, \dots, n\}$ increasingly from left to right.
	For example, the path $ uududdud $ will be numbered $  u_1u_2du_3ddu_4d  $.
	
	Fix some $ n \geq 1 $.
	Let $ \delta  \colon \{1, \dots, n\} \to \{0,1\} $ be an increment function.
	We introduce a notion of  \emph{$ \delta $-altitude}, where the function $ \delta $ encodes that the up step $ u_i $ increases the $ \delta $-altitude by $ \delta(i) $, while all down steps make the $\delta$-altitude decrease by $ 1 $. 
	Thus, we write $ \delta(d) = -1$ and $ \delta(u_i) = \delta(i)$. 
	
	Let $ P $ be a Dyck path of size $ n $ and $ A $ be a subword of $ P $.
	We define the \emph{$ \delta $-elevation} $ \displaystyle \delta(A) = \sum_{s \in A} \delta(s) $ as the change of $ \delta $-altitude of $ A $.

	We define the \emph{$ \delta $-excursion} of an up step $ u_i $ of $ P $ as the smallest subword $ C_i $ of $ P $ starting with $ u_i $ such that $ \delta(C_i) = 0$.
	\begin{remark} \label{rem:deltaexcursion}
		The $\delta$-excursion is well defined since the excursion $ E_i $ as defined in \cref{sec:Dyckpaths} starting at the up step $ u_i $ satisfies $ \delta(E_i) \leq 0 $.
		Moreover, $ C_i $ is always a prefix of $ E_i $.
	\end{remark}

	For instance, if $ \delta(i) = 0 $, the $ \delta $-excursion of $ u_i $ is reduced to $ u_i $.
	If $ \delta(i) = 1$ for all $ i $, then the $ \delta $-excursion of an up step $ u_i $ is exactly the excursion starting at $ u_i $.
	
	Given an increment function $ \delta $ and a Dyck path $ P $ with a valley $ d u_i $, we define the \emph{$ \delta $-rotation} of $ P $ at the up step $ u_i $ as the Dyck path $ Q $ obtained from $ P $ by exchanging the down step that precedes $ u_i $ with the $ \delta $-excursion of $ u_i $.
	In other words, if $ C_i $ is the $\delta$-excursion of $ u_i $, we can write $ P = A d C_i B $ and $ Q = A C_i d B $.
	
	\begin{figure}[h]
		\begin{center}
			\centering		
			\includegraphics[width=8cm]{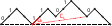}
			\caption{The $\delta$-excursion $ C_3 $ of $ u_3 $ on a Dyck path for $\delta = (0,1,1,1,0,1,0)$.}
			\label{fig:deltaexcursion}
		\end{center}
	\end{figure} 
	
	\begin{remark} \label{rem:delta-dyck}
		If $ Q $ is the $ \delta $-rotation of $ P $ at the up step $ u_i $, then $ Q $ is strictly greater than $ P $ in the Dyck lattice.
	\end{remark}
	\begin{proof}
		Let $ C_i $ be the $\delta$-excursion of $ u_i $ in P.
		We can write $ P = A d C_i B $ as a word and then we have $ Q = A C_i d B $.
		The Dyck path $ Q $ is obtained from $ P $ by moving the down step $ d $ that precedes $ u_i $ to the right.
		
		This can be achieved as a sequence of covering relations in the Dyck lattice.
		Indeed, when moving this down step $ d $ letter by letter to the right, either it is exchanged with an up step $ u_k $ and it is a covering relation in the Dyck lattice or it is exchanged with another down step and the Dyck path is unchanged.
	\end{proof}
	
	This proves that there are no cycles of $\delta$-rotations.
	We can thus define the alt-Tamari poset  $ \Deltamn $ as the reflexive transitive closure of $\delta$-rotations on the set $ Z_n $ of Dyck paths of length $ n $.
	
	\begin{remark} \noindent
		\begin{itemize}
			\item The first value $ \delta(1) $ does not matter in the definition of the poset since $ u_1 $ is never in a valley.
			\item 	The last value $ \delta(n) $ does not matter either in the definition. Indeed, the $\delta$-excursion of the last step $ u_n $ can be either $ u_n $ or $ u_n d $ and in both cases, a $\delta$-rotation at the last up step will always change $ du_nd $ into $ u_n d d $.
			\item 	We recover the \emph{Tamari lattice } when the increment function $\delta$ is such that $ \delta(i) = 1$ for all $ i $, since the $ \delta $-excursion of any up step is always its excursion.
			\item 	We recover the \emph{Dyck lattice }when the increment function $\delta$ is such that $ \delta(i) = 0$ for all $ i $, since the $ \delta $-excursion of any up step is reduced to the up step itself.
		\end{itemize}
	\end{remark}
	
	\begin{lemma} \label{lem:twoexcursions}
		Let $ C_i $ be the $ \delta $-excursion of $ u_i $ in a Dyck path $ P $.
		Let $ C_j $ be the $ \delta $-excursion of $ u_j $ in $ P $ with $ i \neq j $.
		
		Either $ C_i $ and $ C_j $ are disjoint as subwords and we write $ C_i \cap C_j = \emptyset $ or one is included in the other.
		Moreover, they do not end at the same step.
	\end{lemma}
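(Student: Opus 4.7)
The plan is to analyze the geometric structure of the two $\delta$-excursions by tracking the $\delta$-altitude profile of $P$ and exploiting the minimality clause in the definition of a $\delta$-excursion. Writing $h_p$ for the $\delta$-altitude of $P$ after its first $p$ steps and letting $C_i$ occupy positions $[a,b]$ in $P$, the minimality of $b$ forces $h_p > h_{a-1}$ for all $a \leq p < b$, together with $h_b = h_{a-1}$; symmetrically for $C_j$ on positions $[c,d]$. Without loss of generality I will assume $i<j$, so $u_j$ lives at some position $c > a$.

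The dichotomy then follows a clean case split on whether $c > b$ or $c \leq b$. If $c > b$, the two position ranges $[a,b]$ and $[c,d]$ are disjoint, giving $C_i \cap C_j = \emptyset$. Otherwise $a < c \leq b$, which already forces $b > a$, i.e., $\delta(i) = 1$. I would first observe that the last step $s_b$ of $C_i$ must be a down step, since its contribution $h_b - h_{b-1}$ is strictly negative; consequently $u_j$ cannot sit at position $b$, so in fact $a < c < b$. The minimality of $C_i$ then yields $h_{c-1} > h_{a-1}$, and as $p$ ranges from $c$ to $b$ the altitude $h_p$ starts at $h_c \geq h_{c-1}$ and ends at $h_b = h_{a-1} < h_{c-1}$. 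Since each step changes the altitude by at most one, $h_p$ must equal $h_{c-1}$ at some smallest index $d$ with $c \leq d < b$; this $d$ is by definition the endpoint of $C_j$, giving $C_j \subsetneq C_i$ with $d < b$, so the two excursions do not end at the same step.

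The main thing to watch is the altitude bookkeeping in the inclusion case: one needs both the minimality of $b$ (to secure the strict inequality $h_{c-1} > h_{a-1}$) and the strict inequality $h_b < h_{c-1}$ (to force $C_j$ to terminate strictly before position $b$). The collapsed cases $\delta(i) = 0$ (where $b = a$) and $\delta(j) = 0$ (where $d = c$) fit into the same framework without any extra work — the former automatically falls into the disjoint branch since $u_j$ then lies beyond position $a = b$, and the latter still satisfies $d = c < b$.
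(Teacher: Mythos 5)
Your proof is correct and takes essentially the same approach as the paper's: both arguments rest on the minimality of a $\delta$-excursion forcing every intermediate $\delta$-altitude to stay strictly above the starting level, which in the overlapping case pins the endpoint of $C_j$ strictly inside $C_i$. The only difference is cosmetic — you track explicit altitude coordinates $h_p$ and a discrete intermediate-value argument, where the paper writes $C_i = A u_j B$ and manipulates the prefix elevations $\delta(A) > 0$ and $\delta(W) \geq 0$.
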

	
	\begin{proof}
		Suppose $ i < j $.
		Suppose that $ C_i $ and $ C_j $ are not disjoint.
		Then $ u_j $ is a step of $ C_i $.
		
		Write $ C_i = A u_j B $.
		Then as $ C_i $ is the $ \delta $-excursion of $ u_i $, all its strict prefixes $ w $ satisfy $ \delta(w) > 0 $ and so we have $ \delta(A) > 0$.
		
		Moreover, as $ C_j $ is the $ \delta $-excursion of $ u_j$, we have $ \delta(W) \geq 0$ for any prefix of $ C_j $.
		Hence, $ \delta(AW) = \delta(A) + \delta(W) > 0$.
		
		It follows that $ AC_j $ is a prefix of $ C_i $, hence $ C_j $ is included in $ C_i $.
		
		We can finally notice that $ C_i $ and $ C_j $ do not end at the same step, for the same reason.
	\end{proof}
	
	Let $ P $ be a Dyck path of size $ n $ and $\delta$ an increment function.
	
	For $ 1 \leq i \leq n $ we define $ h_i(P) $ as the position of $ u_i $ in $ P $ and $ h(P) = (h_1(P), \dots, h_n(P))$.
	For instance, if $ P =  u_1u_2du_3ddu_4d$ then $ h(P) = (1,2,4,7) $.
	
	For $ 1 \leq i \leq n $, we define $ \ell_i(P ; \delta) $ as the length of the $\delta$-excursion of $ u_i $ in $ P $ and $ \ell(P ; \delta) = (\ell_1(P ; \delta), \dots, \ell_n(P ; \delta))$.
	For instance, if $ P =  u_1u_2du_3ddu_4d$ and $\delta(i) = 1$ for all $ i $, then $ \ell(P ; \delta) = (6,2,2,2) $.
	
	On the Dyck path $ P $ of \cref{fig:deltaexcursion} we have $ h(P) = (1,2,5,6,8,9,12) $ and $ \ell(P ; \delta) = (1,2,7,2,1,2,1) $.

	
	\begin{lemma} \label{lem:deltarotation}
		Let $ P $ be a Dyck path with a valley $ du_i $ and $ C_i $ be the $\delta$-excursion of $ u_i $.
		Let $ Q $ be the result of the $\delta$-rotation of $ P $ at $ u_i $.
		
		For all $ j $ such that $ u_j \in C_i$, we have $ h_j(Q) = h_j(P) - 1$ and for all other $ j $, $ h_j(Q) = h_j(P) $.
		
		Furthermore, if there exists some $ j $ such that the $\delta$-excursion of $ u_j $ in $ P $ ends with the down step $ d $ of the valley $ d u_i $, then $ \ell_j(Q ; \delta) = \ell_j(P ; \delta) + \ell_i(P ; \delta)$.
		For all other $ j $, we have $ \ell_j(Q ; \delta) =  \ell_j(P ; \delta) $.
	\end{lemma}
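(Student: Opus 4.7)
The plan is to unpack the rotation by writing $P = A\,d\,C_i\,B$ so that $Q = A\,C_i\,d\,B$, and then to track, letter by letter, where each up step $u_j$ lands and how its $\delta$-excursion changes.

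For the height claim I would argue directly from this factorization. An up step $u_j$ whose position lies in $A$ has the same prefix to its left in $P$ and in $Q$, and one lying in $B$ also keeps its position since $|dC_i| = |C_id|$. The up steps inside $C_i$, however, were preceded in $P$ by the down step $d$ which in $Q$ has moved to the right of $C_i$; their positions therefore drop by exactly one. This yields the formula for $h_j$.

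For the $\ell_j$ claim my plan is a case analysis on the location of $u_j$ within the decomposition $P = A\,d\,C_i\,B$, controlled by \cref{lem:twoexcursions} applied to the pair $(C_i, C_j)$. If $u_j$ lies in $B$, the suffix of the word starting at $u_j$ is identical in $P$ and $Q$, so $C_j$ is unchanged. If $u_j$ lies strictly inside $C_i$, then \cref{lem:twoexcursions} forces $C_j \subsetneq C_i$ (the reverse inclusion is ruled out because $C_j$ does not start with $u_i$), and since the internal word of $C_i$ is the same in $P$ and $Q$ the excursion of $u_j$ is again unchanged. If $u_j = u_i$, the proper prefixes of $C_i$ in the suffix of $Q$ starting at $u_i$ have the same $\delta$-elevation as in $P$, so $C_i$ remains the $\delta$-excursion of $u_i$ in $Q$. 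The remaining case is $u_j \in A$, which I would split further according to whether $C_j$ stays inside $A$, ends exactly at the down step $d$ of the valley, or extends strictly beyond it.

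The main obstacle will be the last sub-case. Let $\alpha$ denote the $\delta$-elevation reached after reading the portion of $A$ running from $u_j$ to the letter immediately before $d$. If $C_j$ ends strictly before $d$, it lies entirely in the common prefix $A$ and is unchanged. If $C_j$ ends exactly at $d$, then $\alpha = 1$, and in $Q$ the elevation after (tail of $A$)\,$C_i$ equals $\alpha + \delta(C_i) = 1$; since proper prefixes of $C_i$ contribute strictly positive relative $\delta$-elevation (or $C_i$ is a single letter), no earlier return to $0$ is possible, and the new excursion ends at the $d$ sitting just after $C_i$, giving $\ell_j(Q;\delta) = \ell_j(P;\delta) + \ell_i(P;\delta)$. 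If $C_j$ extends strictly past $d$ in $P$ then $\alpha \geq 2$, and \cref{lem:twoexcursions} forces $C_j$ to contain all of $C_i$ and to terminate at a suitable prefix $B'$ of $B$ determined by $\delta(B') = 1 - \alpha$; since $d$ and $C_i$ contribute the same net $\delta$-elevation in either order, the same $B'$ is selected in $Q$ and the length is preserved. Finally, \cref{lem:twoexcursions} guarantees that distinct $\delta$-excursions do not share an endpoint, so at most one index $j$ has $C_j$ ending at $d$, matching the statement of the lemma.
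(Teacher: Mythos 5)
Your proof is correct and takes essentially the same route as the paper's: both arguments reduce to a case analysis on the position of $C_j$ relative to the factor $d\,C_i$, invoke \cref{lem:twoexcursions} to exclude partial overlaps and shared endpoints, and track prefix $\delta$-elevations to see that the excursion endpoint changes only when $C_j$ ends at the down step of the valley. Your bookkeeping with $\alpha$ is just a slightly more explicit version of the paper's computation with $\delta(B)=1$.
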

	
	\begin{proof}
		The statement about the $ h_j(Q) $ is immediate by definition of $\delta$-rotations.
		
		Suppose that there exists $ j $ such that the $\delta$-excursion $ C_j $ of the up step $ u_j $ in $ P $ ends with the down step of the valley $ d u_i $.
		Write $ C_j = Bd $, and $ P = A Bd C_i D $.
		Then $ Q = A B C_i d D $.
		
		The equality $ C_j = Bd$ implies $ \delta(B) = 1 $ and for all non empty prefixes $ w $ of $ B $, we have $ \delta(w) > 0$.
		Similarly, for all prefixes $ w' $ of $ C_i $, we have $\delta(w') \geq 0$.
		So for all prefixes $ w'' $ of $ B C_i $, we have $\delta(w'') > 0$ and $\delta(BC_id) = \delta(BdC_i) = 0$.
		
		Thus, $ BC_id $ is the $\delta$-excursion of $ u_j $ in $ Q $ and we have $ \ell_j(Q ; \delta) = \ell_j(P ; \delta) + \ell_i(P ; \delta)$.
		
		Suppose that $ j $ is such that the $\delta$-excursion $ C_j $ of the up step $ u_j $ in $ P $ does not end with the down step of the valley $ d u_i $.
		
		If $ C_j  \cap d C_i  = \emptyset$, then it is clear that the $\delta$-excursion of $ u_j $ in $ Q $ is still $ C_j $.
		
		If $ C_j \subseteq C_i $ then it is immediate as well that the $\delta$-excursion of $ u_j $ in $ Q $ is still $ C_j $, but translated.
		
		If $ C_i \subseteq C_j $ then we write $ C_j = A d C_i B $ in $ P $.
		The $\delta$-excursion of $ u_j $ in $ Q $ will then be $ A C_i d B $ and its length does not change either.
	\end{proof}
	
	This proves in particular that for all $ Q \geq P $ in $ \Deltamn $, $ h(Q) \leq h(P) $ and $ \ell(Q ; \delta) \geq \ell(P ; \delta) $ component-wise.
	
	\begin{remark}
		One can wander if we have the converse implication, which would give a characterization of the alt-Tamari order. 
		This converse implication holds for the Dyck and the Tamari lattices.
	\end{remark}
	
	\begin{lemma} \label{lem:stepinexcursion} 
		Let $ P $ a Dyck path such that $ u_j $ is in the $\delta$-excursion of $ u_i $.
		
		For all $ Q \geq P $ in $ \Deltamn $, $ u_j $ is in the $\delta$-excursion of $ u_i $.
	\end{lemma}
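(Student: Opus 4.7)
The plan is to reduce immediately to the case of a single covering relation: since $Q \geq P$ in $\Deltamn$ means $Q$ is reached from $P$ by a finite sequence of $\delta$-rotations, it suffices by induction on the length of this chain to prove the statement when $Q$ is obtained from $P$ by a single $\delta$-rotation at some up step $u_k$. Let $C_i$ and $C_k$ denote the $\delta$-excursions of $u_i$ and $u_k$ in $P$, and assume $u_j$ lies in $C_i$.

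If $i = k$, then $P = A d C_i B$ and $Q = A C_i d B$, and since the subword $C_i$ is preserved verbatim and $\delta$-elevations depend only on the letters, $C_i$ remains the $\delta$-excursion of $u_i$ in $Q$; so $u_j$ stays in it. For $i \neq k$, \cref{lem:twoexcursions} leaves four possible configurations of $C_i$ relative to the subword $dC_k$ of $P$, which I would treat in turn. First, if $C_i$ and $dC_k$ are disjoint as subwords, then the rotation does not touch $C_i$, which remains the $\delta$-excursion of $u_i$ in $Q$. Second, if $C_i \subseteq C_k$, then after the rotation $C_i$ is merely shifted one position to the left (inside the translated $C_k$) and, as a word, is still the $\delta$-excursion of $u_i$ in $Q$. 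Third, if $dC_k$ lies strictly inside $C_i$, one writes $C_i = A' d C_k B'$; by the proof of \cref{lem:deltarotation}, the new $\delta$-excursion of $u_i$ in $Q$ is $A' C_k d B'$, which contains the same up steps as $C_i$, in particular $u_j$. Fourth, if $C_i$ ends exactly at the $d$ of the valley $du_k$, one writes $C_i = B'd$; then the proof of \cref{lem:deltarotation} shows the new $\delta$-excursion of $u_i$ in $Q$ is $B' C_k d$, which contains $B'$ and hence $u_j$ (as $u_j$ is an up step, $u_j \in C_i = B'd$ forces $u_j \in B'$).

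The main work is the case analysis, whose substance is already essentially present in \cref{lem:deltarotation}. The only mild subtlety I expect to be the main obstacle is the exhaustiveness of the split: one must verify that an overlap in which $C_i$ begins strictly before $d$ and ends strictly inside $C_k$ cannot occur. This is where I would use both the sign conditions defining $\delta$-excursions (all proper prefixes of a $\delta$-excursion have strictly positive $\delta$-elevation, while the excursion itself has $\delta$-elevation $0$) and \cref{lem:twoexcursions}, which moreover rules out two distinct $\delta$-excursions ending at the same step. A short local computation then shows that such an overlap would force a proper prefix of $C_k$ to have non-positive $\delta$-elevation, contradicting the definition.
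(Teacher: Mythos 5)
Your proof is correct, but it takes a different route from the paper's. Both arguments reduce at once to a single covering relation, and both ultimately rest on \cref{lem:deltarotation}; the difference is in how the conclusion is extracted. The paper rephrases membership numerically --- $u_j$ lies in the $\delta$-excursion of $u_i$ exactly when $h_j(P)-h_i(P)$ is at most the length $\ell_i(P;\delta)$ --- and then concludes in two lines from the monotonicity statements of \cref{lem:deltarotation}: positions of up steps can only decrease (and decrease together when one excursion contains the other), while lengths of $\delta$-excursions can only increase. You instead track the $\delta$-excursion of $u_i$ explicitly through a five-way case analysis on how $C_i$ sits relative to the rotated factor $dC_k$, identifying the new excursion in each configuration; the individual cases are exactly the sub-cases already verified inside the proof of \cref{lem:deltarotation}, so your appeals to that proof are legitimate. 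You also correctly isolate the one real subtlety of your approach, namely exhaustiveness: a partial overlap in which $C_i$ enters $C_k$ but stops before its end (or at its end) must be excluded, and your elevation computation --- $\delta(Xd)>0$ for the proper prefix $Xd$ of $C_i$ forces $\delta(W)<0$ for the corresponding prefix $W$ of $C_k$, contradicting the defining sign condition --- does this, with \cref{lem:twoexcursions} available as a backup for the equal-endpoint case. The trade-off is clear: the paper's numerical argument is shorter and avoids the exhaustiveness check entirely, while yours is more explicit about what the new $\delta$-excursion actually is, at the cost of a case split whose completeness must be argued separately.
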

	
	\begin{proof}
		It is sufficient to prove this for all covering relations of $ P $.
		By definition, $ u_i $ is in the $\delta$-excursion of $ u_j $ in $ P $, if and only if $ h_i(P) -  h_j(P) \leq \ell_j(P)$.
		
		Suppose $ Q $ is an upper cover of $ P $.
		Because of \cref{lem:deltarotation}, we have either $ h_j(Q) = h_j (P) - 1 $ or $ h_j(Q) = h_j (P) $.
		
		\smallskip
		If $ h_j(Q) = h_j (P) - 1 $ then, the full $\delta$-excursion of $ u_j $ is moved.
		Because $ u_i $ is in the $\delta$-excursion of $ u_j $ in $ P $, we have also $ h_i(Q) = h_i (P) - 1  $.
		In this case, $ h_i(Q) -  h_j(Q) = h_i(P) -  h_j(P) $.
		
		If $ h_j(Q) = h_j(P) $ then either we have $ h_i(Q) = h_i (P) - 1 $ or $ h_i(Q) = h_i (P) $.
		Thus, $ h_i(Q) -  h_j(Q) $ is either equal to $ h_i(P) -  h_j(P) $ or to $ h_i(P) -  h_j(P) - 1$.
		
		\smallskip
		In all cases, we have $ h_i(Q) -  h_j(Q) \leq h_i(P) -  h_j(P) \leq \ell_j(P ; \delta) \leq \ell_j(Q ; \delta) $. 
		The last inequality is again guaranteed by \cref{lem:deltarotation}.
		Finally, the inequality $ h_i(Q) -  h_j(Q) \leq \ell_j(Q ; \delta) $ implies that $ u_i $ is still in the $\delta$-excursion of $ u_j $ in $ Q $.
	\end{proof}
	
	\begin{proposition} \label{prop:deltacov}
		The covering relations in the poset $ \Deltamn $ are exactly all the $ \delta $-rotations.
	\end{proposition}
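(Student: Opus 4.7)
The plan is to show that every $\delta$-rotation $P \to Q$ at an up step $u_i$ is a cover in $\Deltamn$; since the order is by definition the reflexive transitive closure of $\delta$-rotations, this will also force every cover to arise from a single such rotation. To this end, I would suppose for contradiction that there exists $R$ with $P < R < Q$ in $\Deltamn$ and pick any $\delta$-rotation $P \to P'$, at some up step $u_j$, with $P' \leq R \leq Q$; it then suffices to prove $j = i$, for then $P' = Q$ and $R = Q$. Throughout I would use the monotonicity consequences of \cref{lem:deltarotation} noted in the text: $P' \leq Q$ in $\Deltamn$ forces $h_k(Q) \leq h_k(P')$ and $\ell_k(Q;\delta) \geq \ell_k(P';\delta)$ for every $k$.

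The first step is an $h$-argument. By \cref{lem:deltarotation}, $h_j(P') = h_j(P) - 1$, whereas $h_j(Q) = h_j(P) - 1$ if $u_j \in C_i$ and $h_j(Q) = h_j(P)$ otherwise. The required inequality $h_j(Q) \leq h_j(P')$ therefore forces $u_j \in C_i$, and \cref{lem:twoexcursions} then yields $C_j \subsetneq C_i$ as subwords: they share $u_j$ so are nested; $C_j$ cannot contain $C_i$ because they start at different up steps; and they cannot coincide because they do not end at the same step.

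The heart of the proof is an $\ell$-argument, for which I need one combinatorial lemma: if $d^\star$ denotes the down step of $P$ directly preceding $u_j$, which is interior to $C_i$, then there exists an up step $u_m$ of $C_i$ whose $\delta$-excursion $C_m$ in $P$ ends exactly at $d^\star$. To prove this I would study the $\delta$-altitude profile of $C_i$: write $a_k$ for the $\delta$-altitude after the first $k$ steps of $C_i$, so $a_0 = 0$, $a_{|C_i|} = 0$, and $a_k \geq 1$ for every non-empty strict prefix (by definition of $\delta$-excursion). Set $e := a_\ell$, where $\ell$ is the position of $d^\star$ in $C_i$; then $e \geq 1$ and $a_{\ell-1} = e+1$. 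Because $a_0 = 0$ and altitudes change by $+1$, $0$ or $-1$, the altitude must take the value $e$ at some position $< \ell$; choose $k^\star$ to be the latest such position. A direct case analysis shows $a_{k^\star+1} = e+1$ (the alternative $e-1$ is blocked because altitudes cannot cross the level $e$ between $k^\star$ and $\ell$), whence the step at position $k^\star+1$ is an up step $u_m$ with $\delta(m)=1$. By maximality of $k^\star$, the altitude stays $\geq e+1$ between $k^\star+1$ and $\ell-1$ and then drops to $e$ at $\ell$, so the $\delta$-excursion $C_m$ of $u_m$ ends precisely at $d^\star$.

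With $u_m$ in hand the contradiction is immediate. Applied to the rotation at $u_j$, \cref{lem:deltarotation} gives $\ell_m(P';\delta) = \ell_m(P;\delta) + \ell_j(P;\delta) > \ell_m(P;\delta)$. Applied to the rotation at $u_i$, the same lemma says $\ell_m(Q;\delta) = \ell_m(P;\delta)$: the only index whose $\ell$-value could change is the one whose $\delta$-excursion in $P$ ends at the down step just before $u_i$, but that down step lies outside $C_i$ and is therefore different from $d^\star$. Combining, $\ell_m(Q;\delta) < \ell_m(P';\delta)$, which contradicts the monotonicity inequality above. The main obstacle I expect is the existence of $u_m$: the statement is geometrically plausible (every interior down step of a $\delta$-excursion closes off some sub-$\delta$-excursion), but pinning it down requires the careful ``last upcrossing of level $e$'' analysis sketched above; once this is available, the remainder is bookkeeping on the $h$ and $\ell$ statistics.
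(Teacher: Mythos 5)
Your proof is correct, but the decisive step is genuinely different from the one in the paper. Both arguments start identically: using the monotonicity of $h$ coming from \cref{lem:deltarotation}, the first $\delta$-rotation in any chain from $P$ towards $Q$ must occur at an up step $u_j$ lying in $C_i$. From there the paper stays entirely with the $h$-statistic: if $j\neq i$, then after this first rotation $h_j$ already equals its final value $h_j(Q)=h_j(P)-1$; since some later rotation in the chain must occur at $u_i$ (only a rotation at $u_i$ decreases $h_i$) and \cref{lem:stepinexcursion} guarantees that $u_j$ remains inside the $\delta$-excursion of $u_i$ throughout, that later rotation would push $h_j$ strictly below $h_j(Q)$, a contradiction. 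You instead obtain the contradiction from the $\ell$-statistic, which forces you to prove a structural fact that the paper never needs: every down step $d^\star$ interior to a $\delta$-excursion $C_i$ is the final step of the $\delta$-excursion of some up step $u_m$ of $C_i$. Your ``last upcrossing of level $e$'' argument for this is sound (the only case you pass over silently is $a_{k^\star+1}=e$, which is excluded directly by the maximality of $k^\star$), and then \cref{lem:deltarotation} gives $\ell_m(P';\delta)=\ell_m(P;\delta)+\ell_j(P;\delta)>\ell_m(P;\delta)=\ell_m(Q;\delta)$, contradicting $\ell(P';\delta)\leq\ell(Q;\delta)$. The trade-off is clear: the paper's route is shorter because it leans on \cref{lem:stepinexcursion}, which is already established; yours avoids that lemma entirely at the cost of an extra combinatorial lemma about $\delta$-excursions, which is a genuinely informative fact in its own right (it identifies which indices can see their $\ell$-value grow under a given rotation) even though the paper does not record it.
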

	\begin{proof}
		As the poset is defined as the reflexive transitive closure of $\delta$-rotations, all covering relations are $\delta$-rotations.
		We have to prove the converse, namely that no $\delta$-rotation can be achieved as a non trivial sequence of $ \delta $-rotations.
		
		Suppose $ Q $ is the $\delta$-rotation of $ P $ at the up step $ u_i $, and $ C_i $ is the $\delta$-excursion of $ u_i $ in $ P $.
		We write $ P = A d C_i B $ and $ Q = A C_i d B $. 
		
		Then, since $ A $ and $ B $ are unchanged, no $\delta$-rotation is possible at an up step in $ A $ or in $ B $.
		
		Suppose $ P \triangleleft Q_1 \triangleleft \dots \triangleleft Q_k = Q $ is a sequence of $ \delta $-rotations from $ P $ to $ Q $.
		Then all $\delta$-rotations of the sequence have to happen at steps in $ C_i $.
		
		Because $ h_i(Q) = h_i(P) - 1$ and $ u_i $ is the only up step of $ C_i $ that changes $ h_i $, then one of those $\delta$-rotations has to happen at $ u_i $.
		
		If the first $\delta$-rotation happens at $ u_j \neq u_i $, then $ h_j(Q_1) = h_j(P) - 1 = h_j(Q) $.
		Thus, $ h_j $ can not decrease any more.
		But then, as \cref{lem:stepinexcursion} ensures that $ u_j $ will always remain in the $\delta$-excursion of $ u_i $ in all the sequence, the $\delta$-rotation at $ u_i $ would make $ h_j $ decrease by $ 1 $ and this is not possible.
		This proves that the first $\delta$-rotation has to happen at $ u_i $ and $ Q_1 = Q $.
		
		Therefore, $ P \leq Q $ is the only chain from $ P $ to $ Q $ and thus it is a covering relation.
	\end{proof}
	
	Let us now prove that there is a boolean structure of refinements of the alt-Tamari posets.
	More precisely, given two increment functions $\delta$ and $\delta'$ of size $ n $, whenever we have $ \delta \leq \delta' $ component-wise, then $ \Deltamn $ refines $ \operatorname{Tam}^{\delta'}_n$.
	This means that whenever we have two Dyck paths $ P $ and $ Q $ such that $ P \leq Q $ in $ \operatorname{Tam}^{\delta'}_n$, then $ P \leq Q $ in $ \Deltamn $. 
	
	\begin{lemma} \label{lem:twodeltas}
		Let $ P $ be a Dyck path of size $ n $ and $\delta \leq \delta'$ two increment functions.
		The $\delta$-excursion $ E_i $ of $ u_i $ in $ P $ is a prefix of the $\delta'$-excursion $ E'_i $ of $ u_i $.
	\end{lemma}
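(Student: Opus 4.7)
The plan is to argue directly from the definition of a $\delta$-excursion as the shortest prefix with a given elevation, using the hypothesis $\delta \leq \delta'$ only in the very last step.

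First I would observe that both $E_i$ and $E'_i$ are, by definition, subwords of $P$ starting with the same letter $u_i$ at position $h_i(P)$; they are therefore both prefixes of the same suffix of $P$, so one of them must be a prefix of the other. Hence to prove the lemma it suffices to show that $|E_i| \leq |E'_i|$, and I will do this by contradiction: suppose $E'_i$ is a strict prefix of $E_i$.

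Next I would exploit the minimality of $E_i$. By definition $E_i$ is the shortest subword starting at $u_i$ with $\delta$-elevation zero, so $\delta(E'_i) \neq 0$. To sharpen this to $\delta(E'_i) > 0$, I would track the partial sums $\delta(W)$ as $W$ ranges over prefixes of $E_i$ containing $u_i$: the first such prefix has $\delta$-value $\delta(i) \in \{0,1\}$, and each subsequent letter changes the partial sum by $-1$, $0$ or $+1$. A discrete intermediate-value argument then shows that the sequence cannot become negative without first passing through $0$, and since $E_i$ is the first prefix where that happens, every strict intermediate prefix (in particular $E'_i$) has $\delta$-elevation at least $1$.

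The final step uses the assumption $\delta \leq \delta'$: for any subword $W$ of $P$, down steps contribute $-1$ in both cases while each $u_j$ contributes $\delta(j) \leq \delta'(j)$, so $\delta'(W) \geq \delta(W)$. Applied to $W = E'_i$ this gives $\delta'(E'_i) \geq \delta(E'_i) \geq 1 > 0$, contradicting the defining property $\delta'(E'_i) = 0$ of the $\delta'$-excursion. This finishes the argument. The only mildly delicate step is the intermediate-value point — one must check that the excursion is really the first return to $0$ from the non-negative side — but this is immediate once one unwinds the definition, so I do not anticipate any serious obstacle.
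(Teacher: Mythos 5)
Your proposal is correct and follows essentially the same route as the paper: the paper's proof also rests on the two facts that every strict prefix $w$ of $E_i$ satisfies $\delta(w) > 0$ and that $\delta \leq \delta'$ forces $\delta'(w) \geq \delta(w)$, merely presented directly (all strict prefixes of $E_i$ have $\delta'(w) > 0$, so the $\delta'$-excursion cannot terminate inside $E_i$) rather than by contradiction. Your explicit discrete intermediate-value justification of the positivity of strict prefixes is a detail the paper leaves implicit, but it is the same underlying argument.
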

	
	\begin{proof}
		Let $ w $ be any strict prefix of $ E_i $.
		As $ E_i $ is a $\delta$-excursion, we have $ \delta(w) > 0 $.
		Then, as $ \delta' \geq \delta $, we also have $ \delta'(w) > 0 $.
		Similarly, $ \delta'(E_i) \geq \delta(E_i)  = 0$.	
	\end{proof}
	
	As the alt-Tamari lattice  $ \operatorname{Tam}^{\delta'}_n$ is defined as the transitive reflexive closure of $\delta'$-rotations, it is sufficient to prove that any $\delta'$-rotation $ P \triangleleft Q $ defines an interval $ [P,Q] $ in $ \Deltamn $.
	
	\begin{proposition}
		Let $\delta \leq \delta'$ be two increment functions of size $ n $.
		Let $ P \triangleleft Q $ be a covering relation in  $ \operatorname{Tam}^{\delta'}_n$.
		We have $ P < Q $ in $ \Deltamn $.
	\end{proposition}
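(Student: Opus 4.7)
My plan is to construct an explicit chain of $\delta$-rotations from $P$ to $Q$, which, combined with \cref{prop:deltacov}, will give $P < Q$ in $\Deltamn$. Since $P \triangleleft Q$ comes from a $\delta'$-rotation at some up step $u_i$, I write $P = A d C'_i B$ and $Q = A C'_i d B$, where $C'_i$ is the $\delta'$-excursion of $u_i$ in $P$. By \cref{lem:twodeltas}, the $\delta$-excursion $C_i$ of $u_i$ in $P$ is a prefix of $C'_i$. The idea is to push the marked down step past $C'_i$ one $\delta$-excursion at a time: first a $\delta$-rotation at $u_i$, then a $\delta$-rotation at the next up step appearing in the remaining portion of $C'_i$, and so on.

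Formally, I maintain the invariant that the current path has the form $P^{(t)} = A C_1 d C_2 B$ where $C_1 C_2 = C'_i$, with $C_1$ a prefix of the subword $C'_i$ and $C_2$ the complementary suffix; initially $C_1$ is empty and $C_2 = C'_i$. If $C_2$ contains no up step, it is a (possibly empty) run of down steps, so $P^{(t)} = A C_1 d C_2 B = A C'_i d B = Q$ and I stop. Otherwise, writing $C_2 = d^k u_j V$ with $k \geq 0$, the step $u_j$ is preceded by a down step in $P^{(t)}$, so $d u_j$ is a valley and a $\delta$-rotation at $u_j$ is available.

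The crucial claim is that the $\delta$-excursion $C_j$ of $u_j$ in $P^{(t)}$ is a prefix of the suffix $u_j V$ of $C'_i$, so the rotation stays within the unprocessed region. Since all previous rotations only rearranged letters in the prefix $C_1 d$ of the window, the letters of $P^{(t)}$ from $u_j$ onward agree with those of $P$; thus $C_j$ equals the $\delta$-excursion of $u_j$ in $P$. By \cref{lem:twodeltas}, $C_j$ is a prefix of the $\delta'$-excursion $C'_j$ of $u_j$ in $P$. Because $u_j$ lies in $C'_i$ and strictly after $u_i$, \cref{lem:twoexcursions} applied to $C'_i$ and $C'_j$ forces $C'_j \subseteq C'_i$; combined with $C_j \subseteq C'_j$, this gives the claim. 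After the rotation, writing $C_j = u_j V'$ with $V'$ a prefix of $V$, the new path is $P^{(t+1)} = A (C_1 d^k C_j) d (V \setminus V') B$, and $C_1 d^k u_j V'$ is still a prefix of $C'_i$, so the invariant is preserved.

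Termination is immediate because $|C_1|$ strictly increases at each iteration (by $k + |C_j| \geq 1$) and is bounded by $|C'_i|$. The resulting chain $P = P^{(0)} \triangleleft P^{(1)} \triangleleft \cdots \triangleleft P^{(T)} = Q$ consists of $\delta$-rotations, witnesses $P \leq Q$ in $\Deltamn$, and is non-trivial since $P \neq Q$. The main technical obstacle is the containment claim for $C_j$; this is the only place where the assumption $\delta \leq \delta'$ is used after the initial decomposition, and it is exactly what ensures that the smaller $\delta$-excursions tile the larger $\delta'$-excursion $C'_i$ in an order-compatible way.
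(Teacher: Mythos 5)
Your proof is correct and follows essentially the same route as the paper: write $P = A\,d\,C'_i\,B$, use \cref{lem:twodeltas} to see that $\delta$-excursions are prefixes of the corresponding $\delta'$-excursions, use \cref{lem:twoexcursions} to see that the $\delta'$-excursion of each up step encountered inside $C'_i$ stays inside $C'_i$, and push the down step through $C'_i$ one $\delta$-excursion (or inert down step) at a time. Your version just makes the paper's induction on the length of the remaining suffix explicit via the invariant $P^{(t)} = A\,C_1\,d\,C_2\,B$ with $C_1C_2 = C'_i$.
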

	
	\begin{proof}
		Using \cref{lem:twoexcursions,lem:twodeltas}, we can build a chain from $ P $ to $ Q $ in $ \Deltamn $, just as we did to prove \cref{rem:delta-dyck}, namely that the Dyck lattice refines all alt-Tamari posets.
		
		We write $ P = A d C'_i B $ and $ Q = A C'_i d B $ with $ C'_i $ the $\delta'$-excursion of $ u_i $.
		We will exchange this down step $ d $ with down steps of $ \delta $-excursions until we reach $ Q $ and this will prove the result.
		
		The $\delta$-excursion $ C_i $ of $ u_i $ in $ P $ is a prefix of $ C'_i = C_i D$.
		Letting $ P_1 = A C_i d D B$, we have $ P \triangleleft P_1 $ in $ \Deltamn $.
		
		Now, either $ D $ starts with a down step and exchanging the two down steps does not change the path or $ D $ starts with an up step $ u_j $.
		In this second case, let $ C_j $ be the $\delta$-excursion of $ u_j $ in $ P $.
		Then $ C_j $ is a prefix of the $\delta'$-excursion $ C'_j $ of $ u_j $ in $ P $.
		Now, $ C'_j $ and $ C'_i $ are two $\delta'$-excursions whose intersection is not empty.
		\Cref{lem:twoexcursions} ensures that $ C'_j $ is thus a prefix of $ D $ and the same holds for $ C_j $.
		Then exchanging $ d $ and $ C_j $ is a $\delta$-rotation.
		
		By induction on the length of $ D $, we build a chain from $ P $ to $ Q $ in $ \Deltamn $.
	\end{proof}

	\subsection{Structure of linear intervals}
	
	Now we study the linear intervals in the alt-Tamari posets. 
	As in the Tamari lattice and the Dyck lattice, we define left and right intervals, which we prove to be linear.
	Then we prove that all linear intervals of height $ k \geq 2 $ are either left or right intervals.
	Lastly, we give a combinatorial description and deduce the same equations as previously.
	
	\begin{lemma} \label{lem:tworotations}
		Let $ \delta $ be an increment function.
		Let $ P \triangleleft Q $ be a covering relation. We can write $ P = A d C_i B $ and $ Q = A C_i d B$ with $ C_i $ the $\delta$-excursion of $ u_i $.
		
		There are at most two covering relations $ Q \triangleleft Q' $ such that $ [P,Q'] $ is a linear interval.
		Precisely, if this second covering relation is not occurring at the valley $ d u_i $ if $ A $ ends with a down step $ d $ or at the valley $ d u_k $ if $ B $ starts with the up step $ u_k $, then the interval $ [P,Q'] $ is not linear.
	\end{lemma}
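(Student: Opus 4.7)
The plan is to enumerate the possible locations of the valley $du_j$ of the second covering $Q \triangleleft Q'$ within the decomposition $Q = A C_i d B$ and to rule out every position other than the two exceptional ones stated. The valleys of $Q$ split into five types according to where $u_j$ lies: strictly inside $A$; strictly inside $C_i$; strictly inside $B$ but not at its first step; at the $A$--$C_i$ boundary (requiring $A$ to end in $d$, with $u_j = u_i$); or at the moved-$d$--$B$ boundary (requiring $B$ to start with $u_k$, with $u_j = u_k$). The last two types are the two exceptional candidates of the statement, so the remaining work is to show that each of the first three yields a non-linear $[P,Q']$.

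In each of these three types, the valley $du_j$ is already a valley of $P$ with the same down step and the same up step: this is immediate for valleys inside $A$ or $B$, and for valleys inside $C_i$ it follows from $C_i$ being an unchanged subword. Consequently the $\delta$-rotation of $P$ at $u_j$ is defined, producing a covering $P \triangleleft P_1$ with $P_1 \neq Q$ since the two rotations act on different up steps. The crux is to show $P_1 \in [P, Q']$: once this is established, $P_1$ and $Q$ are two distinct upper covers of $P$ inside $[P, Q']$, hence incomparable (any strict comparison between them would force a third element inside one of the coverings $P \triangleleft Q$ or $P \triangleleft P_1$), so $[P,Q']$ is not totally ordered and therefore not linear.

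To show $P_1 \leq Q'$, I first handle the regime where the $\delta$-excursion $C_j^P$ of $u_j$ in $P$ equals its $\delta$-excursion $C_j^Q$ in $Q$. This holds automatically for valleys inside $B$, since the word after $u_j$ is identical in $P$ and $Q$; and for valleys inside $C_i$ it follows from an intermediate-value argument using $\delta(C_i)=0$ together with the positivity of the $\delta$-elevation on non-empty proper prefixes of $C_i$, which forces the excursion of any interior $u_j$ to close strictly inside $C_i$. In this regime the rotations at $u_j$ and at $u_i$ act on locally independent segments of $P$ and commute, using \cref{lem:deltarotation} to track that the valley $du_i$ and the excursion $C_i$ are preserved by the rotation at $u_j$. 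Hence $P \triangleleft P_1 \triangleleft Q'$ is a second length-two chain inside $[P,Q']$, exhibiting $P_1$ as the desired fourth element.

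The main obstacle is the residual sub-case: valleys strictly inside $A$ whose $\delta$-excursion extends past $A$, so that $C_j^P \neq C_j^Q$. The plan here is to construct an explicit chain of $\delta$-rotations from $P_1$ up to $Q'$: first rotate $P_1$ at $u_i$ (whose valley $du_i$ and excursion survive in $P_1$ by \cref{lem:deltarotation} and \cref{lem:stepinexcursion}), then successively rotate at the up steps that become valleys as the displaced down steps cascade through the tail of $C_j^Q$ not contained in $C_j^P$. The resulting chain from $P$ through $P_1$ to $Q'$ has length strictly greater than $2$, whereas $P \triangleleft Q \triangleleft Q'$ has length exactly $2$; thus $[P,Q']$ admits two maximal chains of different lengths and cannot be linear, completing the proof.
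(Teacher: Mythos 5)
Your overall strategy coincides with the paper's: for each valley $du_j$ of $Q$ other than the two exceptional ones, you exhibit a second maximal chain from $P$ to $Q'$ (a commuting square or a longer chain), and your enumeration of valley positions, your treatment of valleys inside $C_i$ and inside $B$, and the observation that two distinct upper covers of $P$ lying inside $[P,Q']$ force non-linearity are all sound and match the paper's square and pentagon arguments.

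The gap is in your treatment of valleys inside $A$. You place every such valley whose $\delta$-excursion $C_j$ reaches past $A$ into your second regime and assert that the alternative chain through $P_1$ has length strictly greater than $2$. This is only true when $C_j$ ends exactly at the down step of the valley $du_i$ (so that, by \cref{lem:deltarotation}, $\ell_j$ grows by $\ell_i$ when passing to $Q$); that is the pentagon case. But by \cref{lem:twoexcursions} there is a second possibility: $C_j$ can strictly contain $dC_i$ and close further to the right, inside $B$. Then $C_j^P\neq C_j^Q$ as words, so this case is excluded from your first regime as you defined it, yet $\ell_j(Q;\delta)=\ell_j(P;\delta)$ and the two rotations still commute: rotating $P_1$ at $u_i$ lands directly on $Q'$, the interval is a square, and your claim of a chain of length greater than $2$ is false (there is also no ``tail of $C_j^Q$ not contained in $C_j^P$'' for your cascade to run through). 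A concrete instance is $\delta(2)=\delta(3)=1$, $\delta(4)=0$, $P=u_1du_2u_3du_4dd$ with $i=4$ and $j=2$: here $C_4=u_4$ and $C_2=u_2u_3du_4d$. The conclusion of non-linearity still holds in this sub-case, but via the square mechanism; the correct dichotomy, as in the paper's proof, is whether $C_j$ ends at the down step of the valley $du_i$, not whether $C_j$ leaves $A$ or whether $C_j^P=C_j^Q$ literally.
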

	
	\begin{proof}	
		Let $ Q' $ be the $\delta$-rotation of $ Q $ at the up step $ u_j $ with $ u_j \neq u_i $ and $ u_j \neq u_k $ if $ B $ starts with the up step $ u_k $.
		
		We want to prove that $ [P,Q'] $ is not linear.
		We have one maximal chain $ P \triangleleft Q \triangleleft Q' $ from $ P $ to $ Q' $.
		We will show that there is a different maximal chain from $ P $ to $ Q' $.
		
		\medskip
		\underline{Case $ 1 $}: Suppose that $ u_j \in C_i $.
		Then, we can write $ Q' = A C'_i d B $ and if $ P' $ is the $\delta$-rotation of $ P $ at $ u_j $, we have $ P' =  A d C'_i B $, and we have another maximal chain $ P \triangleleft P' \triangleleft Q' $ from $ P $ to $ Q' $.
		Thus, $ [P,Q] $ is a non linear interval as shown in \cref{fig:deltamsquare}.
		
		\begin{figure}[h]
			\begin{center}
				\centering
				\begin{minipage}[c]{.45\linewidth}
					\begin{center}
						\includegraphics[width=5cm]{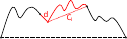}
					\end{center}
				\end{minipage}
				$ \leq $
				\begin{minipage}[c]{.45\linewidth}
					\begin{center}
						\includegraphics[width=5cm]{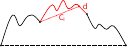}
					\end{center}
				\end{minipage}
				
				\bigbreak
				
				\begin{minipage}[c]{.45\linewidth}
					\begin{center}
						\rotatebox[origin=c]{-90}{$\leq$}
					\end{center}
				\end{minipage}
				\begin{minipage}[c]{.45\linewidth}
					\begin{center}
						\rotatebox[origin=c]{-90}{$\leq$}
					\end{center}
				\end{minipage}
				\bigbreak
				\begin{minipage}[c]{.45\linewidth}
					\begin{center}
						\includegraphics[width=5cm]{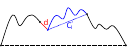}
					\end{center}
				\end{minipage}
				$ \leq $
				\begin{minipage}[c]{.45\linewidth}
					\begin{center}
						\includegraphics[width=5cm]{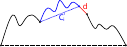}
					\end{center}
				\end{minipage}
				
				\caption{A square in an alt-Tamari poset.}
				\label{fig:deltamsquare}
			\end{center}
		\end{figure} 
		
		\medskip
		\underline{Case $ 2 $}: Suppose now that $ u_j \notin C_i $.
		Let $ C_j $ be the $\delta$-excursion of $ u_j $ in $ P $.
		Then, two more cases occur.
		Either $ C_j $ is directly followed by $ C_i $ in $ P $ or not.
		
		\medskip
		Case 2.1: Suppose that $ C_j = E d $ and $ P = A' d E d C_i B $ as in the first case of \cref{lem:deltarotation}.
		Then, we can write $ Q = A' d E C_i d B $ and $ Q' = A' E C_i d d B $.
		Set $ P' = A' E d d C_i B $ and $ P'' =  A' E d C_i d B $ and we have another maximal chain $  P \triangleleft P' \triangleleft P'' \triangleleft Q' $ from $ P $ to $ Q' $ as we can see in \cref{fig:deltampentagon}. 
		
		\begin{figure}[h]
			\begin{center}
				\centering
				\begin{minipage}[c]{.3\linewidth}
					\begin{center}
						\includegraphics[width=3.5cm]{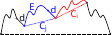}
					\end{center}
				\end{minipage}
				$ \leq $
				\begin{minipage}[c]{.3\linewidth}
					\begin{center}
						\includegraphics[width=3.5cm]{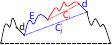}
					\end{center}
				\end{minipage}
				$ \leq $
				\begin{minipage}[c]{.3\linewidth}
					\begin{center}
						\includegraphics[width=3.5cm]{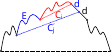}
					\end{center}
				\end{minipage}
				
				\bigbreak
				
				\begin{minipage}[c]{.45\linewidth}
					\begin{center}
						\rotatebox[origin=c]{-45}{$\leq$}
					\end{center}
				\end{minipage}
				\begin{minipage}[c]{.45\linewidth}
					\begin{center}
						\rotatebox[origin=c]{45}{$\leq$}
					\end{center}
				\end{minipage}
				
				\bigbreak
				\begin{minipage}[c]{.3\linewidth}
					\begin{center}
						\includegraphics[width=3.5cm]{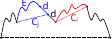}
					\end{center}
				\end{minipage}
				$ \leq $
				\begin{minipage}[c]{.3\linewidth}
					\begin{center}
						\includegraphics[width=3.5cm]{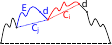}
					\end{center}
				\end{minipage}
				
				\caption{A pentagon in an alt-Tamari poset.}
				\label{fig:deltampentagon}
			\end{center}
		\end{figure}

		\medskip
		Case 2.2: Suppose that $ C_j $ is not directly followed by $ C_i $ in $ P $.
		Then the $\delta$-excursion of $ u_j $ in $ Q $ is still $ C_j $ and we can write $ Q' = A' C_i d B' $ after the $\delta$-rotation at $ u_j $.
		Let $ P' = A' d C_i B' $ be the $\delta$-rotation of $ P $ at $ u_j $.
		We have again another maximal chain $ P \triangleleft P' \triangleleft Q' $ from $ P $ to $ Q' $ and thus, a square similar to the one in \cref{fig:deltamsquare}.
		
		In all these cases, the interval $ [P, Q'] $ is not linear.
	\end{proof}
	
	\begin{definition} \label{def:altinterval}
		We say that an interval $ [P,Q] $ is a \emph{left interval} if we can write $ P = A d^k C_i B $ and $ Q = A C_i d^k B$ for some $ k \geq 2 $ with $ C_i $ a $ \delta $-excursion.
		
		We say that $ [P,Q] $ is a \emph{right interval} if we have $ P = A d C_1 \dots C_k B $ and $ Q = A C_1 \dots C_k d B$ with $ k $ $\delta $-excursions $ C_1, \dots, C_k $ for some $ k \geq 2 $.
	\end{definition}
	
	Remark that in both cases, for $ k = 1 $, these would simply describe a covering relation.
	
	\begin{proposition}
		A left interval is linear and the $ k $ that appears in \cref{def:altinterval} is its height.
	\end{proposition}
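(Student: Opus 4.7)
My plan is to define $P_j := A d^{k-j} C_i d^j B$ for $0 \le j \le k$, so that $P_0 = P$ and $P_k = Q$, and then show that every chain from $P$ inside $[P,Q]$ coincides with an initial segment of $P_0, P_1, \dots, P_k$. This will immediately give both linearity of $[P,Q]$ and the fact that its height is $k$.

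For the existence of the chain, I will check that each $P_{j-1} \triangleleft P_j$ is precisely the $\delta$-rotation at $u_i$. The defining $\delta$-signature of $C_i$ (total $\delta$-elevation zero, with all proper prefixes having positive $\delta$-elevation) is intrinsic to $C_i$ as a word, so $C_i$ remains the $\delta$-excursion of $u_i$ in each $P_{j-1}$; moreover the required valley $d u_i$ sits between the last $d$ of the block $d^{k-j+1}$ and $u_i$. This produces a chain of length $k$, so $P \le Q$ in $\Deltamn$ and $\mathrm{height}([P,Q]) \ge k$.

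For uniqueness, I argue by induction on $j$ that any chain $P = X_0 \triangleleft X_1 \triangleleft \cdots$ inside $[P,Q]$ satisfies $X_j = P_j$. Assuming $X_j = P_j$, let $u_{j'}$ be the up step at which the rotation $P_j \triangleleft X_{j+1}$ takes place. When $u_{j'}$ lies in $A$, in $B$, or is the first step of $B$ at the $d^j \mid B$ boundary, the position of $u_{j'}$ coincides in $P_j$ and in $Q$, so $h_{j'}(X_{j+1}) = h_{j'}(P_j) - 1 < h_{j'}(Q)$, contradicting the $h$-monotonicity of $\le$ in $\Deltamn$ provided by \cref{lem:deltarotation}. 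Hence $X_{j+1} \not\le Q$ in all such cases.

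The subtle case, which I expect to be the main obstacle, is when $u_{j'}$ lies strictly inside $C_i$ with $j' \ne i$, where the $h$-invariant fails to give a contradiction. Here I invoke the $\ell$-invariant: by \cref{lem:twoexcursions} the up step $u_{j^*}$ whose $\delta$-excursion in $P_j$ ends at the rotated down step must itself lie inside $C_i$ (since $C_{j^*}$ overlaps $C_i$ yet ends strictly before the end of $C_i$). The internal $\delta$-structure of $C_i$ is preserved in $Q$, so $\ell_{j^*}(Q) = \ell_{j^*}(P_j)$, while \cref{lem:deltarotation} gives $\ell_{j^*}(X_{j+1}) = \ell_{j^*}(P_j) + \ell_{j'}(P_j) > \ell_{j^*}(Q)$, contradicting the $\ell$-monotonicity of $\le$. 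Only the rotation at $u_i$ survives, yielding $X_{j+1} = P_{j+1}$ and closing the induction.
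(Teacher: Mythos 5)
Your chain construction and your elimination of rotations at up steps outside $C_i$ match the paper's proof: the paper uses the same chain $P_j = A d^{k-j} C_i d^j B$ and the same observation that $h_{j'}(Q) = h_{j'}(P)$ for $u_{j'} \notin C_i$ forbids any rotation at such a step. Where you diverge is the case of a rotation at $u_{j'} \in C_i$ with $j' \neq i$. The paper stays with the $h$-statistic but counts globally over the whole chain: only rotations at $u_i$ can decrease $h_i$, so any maximal chain contains exactly $k$ of them, and by \cref{lem:stepinexcursion} each of these already decreases $h_{j'}$ by $1$ for every $u_{j'} \in C_i$; since the total budget is $h_{j'}(P) - h_{j'}(Q) = k$, no further rotation at $u_{j'}$ can occur. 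You instead rule out such a rotation locally via the $\ell$-statistic, which keeps the argument a purely step-by-step induction. Both routes are viable.

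However, your $\ell$-argument has one unjustified step. \cref{lem:deltarotation} increases some $\ell_{j^*}$ only \emph{if there exists} an up step $u_{j^*}$ whose $\delta$-excursion ends at the down step $d$ of the valley $d u_{j'}$ being rotated; you write ``the up step $u_{j^*}$ whose $\delta$-excursion in $P_j$ ends at the rotated down step'' as though its existence were automatic, and you use \cref{lem:twoexcursions} only to locate it inside $C_i$. (That existence is genuinely not automatic: for a general down step of a general path there need be no such $u_{j^*}$, e.g.\ when $\delta \equiv 0$ no $\delta$-excursion ends at a down step at all.) Existence does hold in your situation, but it needs an argument: since $u_{j'}$ lies strictly inside $C_i$, the prefix of $C_i$ ending at that down step $d$ has $\delta$-elevation $v \geq 1$; taking the last position before $d$ at which the $\delta$-altitude measured from the start of $C_i$ equals $v$, the step there must raise the altitude, hence is an up step $u_{j^*}$ with $\delta(j^*) = 1$, and the subword from $u_{j^*}$ to $d$ has all proper prefixes of positive $\delta$-elevation and total elevation $0$, i.e.\ it is exactly the $\delta$-excursion of $u_{j^*}$. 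With this first-return argument supplied (together with the observation, which you do make, that $C_{j^*} \subseteq C_i$ travels with the block $C_i$, so $\ell_{j^*}(Q) = \ell_{j^*}(P_j)$), your proof closes. If you prefer to avoid the issue entirely, the paper's global count of the decrease of $h_{j'}$ does so.
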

	
	\begin{proof}
		Let $ [P, Q] $ be a left interval.
		We can write $ P = A d^k C_i B $ and $ Q = A C_i d^k B$ with $ C_i $ the $\delta$-excursion of $ u_i $ for some $ k \geq 2 $.
		
		First, we clearly have a maximal chain of length $ k $ from $ P $ to $ Q $, namely $ P = P_0 \triangleleft P_1 \triangleleft \dots \triangleleft P_k = Q$, where $ P_j = A d^{k-j} C_i d^j  B$.
		We want to prove that it is the unique maximal chain from $ P $ to $ Q $.
		
		As in the proof of \cref{prop:deltacov}, for all $ u_j \notin C_i$, $ h_j(Q) = h_j(P) $ so in any chain from $ P $ to $ Q $, only $\delta$-rotations at up steps of $ C_i $ are possible.
		Moreover, for all $ u_j \in C_i $, $ h_j(Q) = h_j(P)  - k$.
		Remark than only $\delta$-rotations at $ u_i $ can make $ h(Q') $ decrease by $ 1 $ for any $ Q' \in [P,Q] $, so that any maximal chain from $ P $ to $ Q $ contains $ k $ $\delta$-rotations at $ u_i $.
		Furthermore, because of \cref{lem:stepinexcursion}, for all $ u_j \in C_i $, each of these $\delta$-rotation at $ u_i $ will make $ h_j(Q') $ decrease by $ 1 $ for all $ Q' \geq P $.
		Thus, any maximal chain from $ P $ to $ Q $ must contain exactly $ k $ $\delta$-rotations at $ u_i $ and no other $\delta$-rotations.
		
		Thus, any left interval is linear. 
	\end{proof}
	
	\begin{proposition}
		A right interval is linear and the $ k $ that appears in \cref{def:altinterval} is its height.
	\end{proposition}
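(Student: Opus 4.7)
The plan is to proceed in analogy with the proof for left intervals, with an added rigidity argument based on the $\ell$-values. First I would exhibit the explicit chain
\[
P = P_0 \triangleleft P_1 \triangleleft \cdots \triangleleft P_k = Q, \qquad P_j = A\, C_1 \cdots C_j\, d\, C_{j+1} \cdots C_k\, B,
\]
and check that each step $P_{j-1} \triangleleft P_j$ is a $\delta$-rotation at the first up step $u_{i_j}$ of $C_j$: the suffix of $P_{j-1}$ starting at $u_{i_j}$ coincides with the corresponding suffix of $P$, so $C_j$ remains the $\delta$-excursion of $u_{i_j}$ in $P_{j-1}$. This shows that the height of $[P,Q]$ is at least $k$.

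For the uniqueness of a maximal chain (and hence linearity and the equality of the height with $k$), the key new point is the rigidity of the $\ell$-values at the $u_{i_j}$. A direct check shows that the $\delta$-excursion of $u_{i_j}$ in $Q$ is still $C_j$, so $\ell_{i_j}(P;\delta) = \ell_{i_j}(Q;\delta) = |C_j|$ for every $j$; combining this with the monotonicity of $\ell$ from \cref{lem:deltarotation} forces $\ell_{i_j}(P';\delta) = |C_j|$ throughout $[P,Q]$. By \cref{lem:stepinexcursion}, the $\delta$-excursion of $u_{i_j}$ in any $P' \in [P,Q]$ contains the up steps of $C_j$, and the length constraint together with \cref{lem:twoexcursions} then forces these $k$ excursions to be pairwise disjoint blocks appearing in the left-to-right order $C_1, \ldots, C_k$ in $P'$. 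Moreover, the $h$-argument from the left interval proof shows that no up step of $A$ or $B$ can be moved along $[P,Q]$, since every $\delta$-rotation strictly decreases the $h$-value of its up step while $h_m(Q)=h_m(P)$ for $u_m$ outside the $C_\ell$.

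Putting these structural constraints together, every $P' \in [P,Q]$ must consist of $A$ at the leftmost $|A|$ positions, then the $k$ rigid blocks $C_1,\ldots,C_k$ in order, then $B$, plus a single extra down step inserted in one of the $k+1$ gaps around the blocks; in other words, $P'$ must be of the form $A\,C_1\,\cdots\,C_l\,d\,C_{l+1}\,\cdots\,C_k\,B$ for some $l \in \{0,1,\dots,k\}$. Hence $[P,Q]$ contains exactly the $k+1$ paths $P_0, P_1, \ldots, P_k$, which form the unique maximal chain, and the interval is linear of height exactly $k$.

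The main obstacle is the precise verification of this block-rigidity statement: one must check carefully that no intermediate rotation can cause $u_{i_j}$'s $\delta$-excursion to acquire length greater than $|C_j|$ or to be pulled inside some foreign $\delta$-excursion. I expect this to follow from the iterative use of \cref{lem:deltarotation}, \cref{lem:stepinexcursion} and \cref{lem:twoexcursions}, exactly paralleling the bookkeeping used in the left interval proof.
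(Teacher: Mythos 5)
Your explicit chain $P_j = A\,C_1\cdots C_j\,d\,C_{j+1}\cdots C_k\,B$ and the observation that $\ell_{i_j}(P;\delta)=\ell_{i_j}(Q;\delta)$, hence $\ell_{i_j}(P';\delta)$ is constant on $[P,Q]$, are correct and both appear in the paper's argument. The genuine gap is the block-rigidity step, which you yourself flag as the main obstacle: the static invariants you list (constancy of the $\ell_{i_j}$, the fact from \cref{lem:stepinexcursion} that the $\delta$-excursion of $u_{i_j}$ keeps containing all up steps of $C_j$, the bound $h_m(P)-1\leq h_m(P')\leq h_m(P)$, and the frozenness of $A$ and $B$) do \emph{not} imply that every $P'\in[P,Q]$ has the form $A\,C_1\cdots C_l\,d\,C_{l+1}\cdots C_k\,B$. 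Concretely, take $C_1=u_au_bu_cddd$ with $\delta(a)=\delta(b)=\delta(c)=1$, so $P=A\,d\,u_au_bu_cddd\cdots$, and let $P'=A\,u_au_b\,d\,u_c\,ddd\cdots$ be obtained by moving $u_a$ and $u_b$ one position to the left but not $u_c$. Then the $\delta$-excursion of $u_a$ in $P'$ is $u_au_bdu_cdd$, which still has length $6=\ell_a(P;\delta)$ and still contains $u_a,u_b,u_c$; all $h$-values have decreased by $0$ or $1$; and \cref{lem:twoexcursions} is not violated. Every condition you impose is met, yet $P'$ is not of the claimed form (it is of course not in the alt-Tamari interval, but that is precisely what remains to be proved). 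Excluding such paths requires a dynamic argument about which $\delta$-rotations can actually occur along a chain, not just invariants of the endpoints.

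The paper closes exactly this hole by inducting on $k$ rather than classifying the whole interval at once: one only needs to analyse the first rotation out of $P$, whose word form is explicitly known. Rotations away from the $u_{i_j}$ are excluded by the $h$-argument together with \cref{lem:stepinexcursion}, and a first rotation at $u_{i_{j+1}}$ for $j\geq 1$ is excluded because $C_j$ and $C_{j+1}$ are consecutive, so by \cref{lem:deltarotation} such a rotation would change $\ell_{i_j}(P;\delta)$ into $\ell_{i_j}(P;\delta)+\ell_{i_{j+1}}(P;\delta)$, contradicting $\ell_{i_j}(Q;\delta)=\ell_{i_j}(P;\delta)$. Hence the first rotation is at $u_{i_1}$, the interval $[P_1,Q]$ is again a right interval with $k-1$ excursions (or a covering relation, handled by \cref{prop:deltacov}), and induction finishes. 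You should either restructure your proof along these lines or supply an actual proof of block rigidity; as written, the central implication is asserted but false as a consequence of the cited lemmas alone.
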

	
	\begin{proof}
		Let $ [P, Q] $ be a right interval.
		We can write $ P = A d C_1 \dots C_k B $ and $ Q = A C_1 \dots C_k d B$, where $ C_1, \dots, C_k $ are $k $ $ \delta$-excursions for some $ k \geq 2 $.
		Recall that for $ k = 1 $, $ [P,Q] $ would be a covering relation and thus a linear interval of height $ 1 $.
		
		Again, we can write a maximal chain of length $ k $ from $ P $ to $ Q $, namely $ P = P_0 \triangleleft P_1 \triangleleft \dots \triangleleft P_k = Q$, where  $ P_j = A C_1 \dots C_{j} d C_{j+1} \dots C_k  B$.
		We prove by induction on $ k $ that it is the unique maximal chain from $ P $ to $ Q $.
		The case $ k = 1 $ is proven in \cref{prop:deltacov}.
		
		For all up steps $ u_j $ in one of these $ k $ $\delta$-excursions, $ h_j(Q)  = h_j(P) - 1$ and for all other $ u_j $, $ h_j(Q)  = h_j(P) $.
		Furthermore, we also have $ \ell_j(Q; \delta) = \ell_j(P ; \delta) $.
		
		Let $ u_{i_j} $ be the first step of $ C_j $.
		Because of \cref{lem:stepinexcursion}, for all $ Q' \geq P $, any $\delta$-rotation that moves $ u_{i_j} $ will also move all up steps of $ C_j $.
		Hence, in any maximal chain from $ P $ to $ Q $, the only possible $\delta$-rotations happen at the steps $ u_{i_j} $.
		
		Moreover, for all $ 1 \leq j \leq k-1$, $ C_j $ and $ C_{j+1} $ are two consecutive $\delta$-excursions in $ P $ so a $\delta$-rotation at $ u_{i_{j+1}}  $ would change $ \ell_{i_j}(P ; \delta) $ into $ \ell_{i_j}(P ; \delta) + \ell_{i_{j+1}}(P ; \delta)$, as stated in \cref{lem:deltarotation}.
		Thus, in any maximal chain from $ P $ to $ Q $, the first $\delta$-rotation must happen at $ u_{i_1} $.
		
		Then, $ [P_1, Q] $ is either a covering relation if $ k = 2 $ or a right interval with $ k-1 $ $\delta$-excursions otherwise.
		By induction, $ [P_1, Q] $ is a linear interval of height $ k-1 $ and thus, $ [P,Q] $ is an interval of height $ k $ since it contains a unique maximal chain of length $ k $.
		
		It follows that any right interval is linear.
	\end{proof}
	
	\begin{lemma} \label{lem:notlinear}
		Let $ P = A d C_1 C_2 B $ with $ C_1 = Ed $ the $\delta$-excursion of $ u_i $ and $ C_2 $ the $\delta$-excursion of $ u_j $. 
		
		If $ Q = A E C_2 dd B $, then the interval $ [P,Q] $ is not linear.
	\end{lemma}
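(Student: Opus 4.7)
The plan is to exhibit two distinct upper covers of $P$, both lying in the interval $[P,Q]$, which immediately contradicts linearity. Writing out $P = AdC_1 C_2 B = A\, d\, E\, d\, C_2\, B$, I observe that $P$ has two valleys accessible for a $\delta$-rotation: the valley $du_i$ at the start of $C_1$, and the valley $du_j$ between $C_1$ and $C_2$ (this last $d$ is the final letter of $C_1 = Ed$, and $C_2$ begins with $u_j$). Rotating at $u_i$ exchanges the first $d$ with its $\delta$-excursion $C_1 = Ed$, producing
$$P_1 = AEddC_2B.$$
Rotating at $u_j$ instead exchanges the last $d$ of $C_1$ with the $\delta$-excursion $C_2$ of $u_j$, producing
$$P_1' = AdEC_2dB.$$
By \cref{prop:deltacov} both $P \triangleleft P_1$ and $P \triangleleft P_1'$ are covering relations, and plainly $P_1 \neq P_1'$.

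Next I would check that $P_1$ and $P_1'$ both lie below $Q = AEC_2ddB$ in $\Deltamn$. For $P_1'$, the crucial point is that after the rotation at $u_j$ the $\delta$-excursion of $u_i$ has grown to $EC_2d$: since $C_1 = Ed$ is a $\delta$-excursion we have $\delta(E) = 1$, and since $C_2$ is a $\delta$-excursion we have $\delta(C_2) = 0$; every proper prefix of $EC_2d$ starting at $u_i$ therefore has strictly positive $\delta$-elevation while $\delta(EC_2d) = 0$. The $\delta$-rotation of $P_1'$ at $u_i$ thus yields $AEC_2ddB = Q$, so $P_1' \triangleleft Q$. For $P_1$, the word following $u_j$ is unchanged from $P$, so the $\delta$-excursion of $u_j$ is still $C_2$; two successive $\delta$-rotations at $u_j$ produce the chain
$$P_1 \;\triangleleft\; AEdC_2dB \;\triangleleft\; AEC_2ddB \;=\; Q,$$
each step valid because $C_2$ remains the $\delta$-excursion of $u_j$ throughout, in accordance with \cref{lem:deltarotation}.

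Since $P$ admits two distinct upper covers $P_1$ and $P_1'$ inside $[P,Q]$, the interval cannot be totally ordered, hence is not linear. The only delicate point is correctly identifying the $\delta$-excursions after each rotation, and that amounts to a short bookkeeping argument based on the elevations $\delta(E) = 1$ and $\delta(C_2) = 0$ together with the fact that $\delta$-excursions are preserved or concatenated as described by \cref{lem:deltarotation}; no substantive obstacle arises beyond this.
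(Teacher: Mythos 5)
Your proof is correct and follows essentially the same route as the paper, which exhibits the same pentagon: the two maximal chains $P \triangleleft AdEC_2dB \triangleleft Q$ and $P \triangleleft AEddC_2B \triangleleft AEdC_2dB \triangleleft Q$, whose second elements are exactly your two distinct upper covers $P_1'$ and $P_1$. Your verification that the $\delta$-excursion of $u_i$ grows to $EC_2d$ after the rotation at $u_j$ (via $\delta(E)=1$ and $\delta(C_2)=0$) is the same bookkeeping the paper delegates to its figure, just spelled out more explicitly.
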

	
	\begin{proof}
		We have two chains $ P \triangleleft A d E C_2 d B \triangleleft Q$ and $ P \triangleleft A C_1 d C_2 d B \triangleleft A C_1 C_2 d d B \triangleleft Q$ from $ P $ to $ Q $. 
		This is exactly the situation of \cref{fig:deltampentagon}.
	\end{proof}
	
	\begin{proposition}
		In the alt-Tamari poset $ \Deltamn $, all linear intervals of height $ k \geq 2 $ are either right or left intervals.
	\end{proposition}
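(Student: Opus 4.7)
The plan is to mirror the induction scheme already used for the Tamari and the Dyck lattices: settle the case $k=2$ as a direct application of \cref{lem:tworotations}, then induct on the height. Throughout I use the standard fact that any sub-interval of a linear interval is itself linear, so constraints on short pieces of a chain propagate.

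For the base case, consider a linear interval of height $2$ with maximal chain $P \triangleleft Q \triangleleft R$. Write the first $\delta$-rotation as $P = A d C_i B$ and $Q = A C_i d B$, with $C_i$ the $\delta$-excursion of $u_i$. \Cref{lem:tworotations} leaves only two candidates for the second rotation: one at the valley $du_i$, which requires $A$ to end with $d$, and one at a valley $du_j$ where $u_j$ is the first step of $B$. The first possibility gives $P = A'd^2 C_i B$ and $R = A'C_i d^2 B$, a left interval; the second, letting $C_j$ be the $\delta$-excursion of $u_j$ in $Q$, gives $P = A d C_i C_j B'$ and $R = A C_i C_j d B'$, a right interval with two consecutive $\delta$-excursions.

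For the inductive step, let $[P,Q]$ be linear of height $k+1 \geq 3$ and let $Q'$ be the lower cover of $Q$ in $[P, Q]$. Then $[P, Q']$ is linear of height $k$, so by induction it is either a left or right interval. Writing $P_{k-1}$ for the predecessor of $Q'$ in the unique chain from $P$ to $Q'$, the sub-interval $[P_{k-1}, Q]$ is linear of height $2$, so \cref{lem:tworotations} again leaves only two candidates for the final covering relation $Q' \triangleleft Q$.

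The main obstacle is ruling out the wrong candidate in each case. If $[P, Q']$ is a left interval, written as $P = A d^k C_i B$ and $Q' = A C_i d^k B$, then $P_{k-1} = A d C_i d^{k-1} B$; its suffix $d^{k-1}B$ begins with a down step because $k \geq 2$, so the ``right'' candidate of \cref{lem:tworotations} is unavailable, and the ``left'' candidate forces $A$ to end with $d$ and produces a left interval of height $k+1$. If $[P, Q']$ is a right interval with $\delta$-excursions $C_1, \dots, C_k$, the ``left'' candidate of \cref{lem:tworotations} would yield a $Q$ of the shape $A C_1 \dots C_{k-2} C_{k-1}' C_k d d B$ with $C_{k-1} = C_{k-1}' d$; however, applying \cref{lem:notlinear} with outer prefix $AC_1 \dots C_{k-2}$ and the two consecutive excursions $C_{k-1}, C_k$ shows that $[P_{k-2}, Q]$ is not linear, contradicting the linearity of $[P, Q]$. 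Hence only the ``right'' candidate survives, which attaches a new $\delta$-excursion $C_{k+1}$ beginning at the first up step of $B$ and yields a right interval of height $k+1$. Since left and right intervals have already been shown to be linear, this closes the induction.
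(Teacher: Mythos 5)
Your proposal is correct and follows essentially the same route as the paper: the base case $k=2$ via \cref{lem:tworotations}, then induction on the height, with the left case forced because the suffix after $C_i$ begins with a down step, and the spurious candidate in the right case excluded by \cref{lem:notlinear} applied to the two consecutive excursions $C_{k-1}, C_k$. The only cosmetic difference is that you phrase the application of \cref{lem:tworotations} through the height-$2$ subinterval $[P_{k-1},Q]$ rather than directly through the last covering relation, which amounts to the same thing.
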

	
	\begin{proof}
		We already know thanks to \cref{lem:tworotations} that there are only two kinds of linear intervals of height $ 2 $, and they are precisely left and right intervals. We will prove the result by induction on height.
		
		Let $ [P,Q] $ be a linear interval of height $ k+1 \geq  3 $.
		Let $ Q' $ be the lower cover of $ Q $ in $ [P,Q] $, such that $ [P,Q'] $ is a linear interval of height $ k \geq 2 $.
		By induction, it is either a left of a right interval.
		
		\medskip
		Suppose $ [P,Q'] $ is a left interval.
		Then we can write $ P = A d^k C_i B $ and $ Q = A C_i d^k B $ with $ C_i $ the $\delta$-excursion of some up step $ u_i $.
		
		As the last rotation in $  [P,Q'] $ occurs at $ u_i $ and $ C_i $ is followed by at least two down steps in $ Q' $, the \cref{lem:tworotations} assures that the only possible $\delta$-rotation $ Q' \triangleleft Q $ such that $ [P,Q] $ is a linear interval is again at the up step $ u_i $.
		This produces a left interval of height $ k+1 $.
		
		\medskip
		Suppose $ [P,Q'] $ is a right interval.
		Then we can write $ P = A d C_1 \dots C_k B $ and $ Q = A C_1 \dots C_k d B $ with $ k $ $\delta$-excursions $ C_1, \dots, C_k $.
		
		Now, the \cref{lem:tworotations} assures that there are only two up steps of $ Q' $ where a $\delta$-rotations might produce an interval $ [P,Q] $ that is still linear, namely the first step of $ C_k $ and the first step of $ B $.
		
		The \cref{lem:notlinear} shows that a $\delta$-rotation at the first step of $ C_k $ would produce a non-linear interval.
		Hence, the only possible $\delta$-rotation $ Q' \triangleleft Q $ happens at the first step of $ B $ and this produces a right interval of height $ k+1 $.
	\end{proof}
	
	\subsection{Combinatorial description and counting}

	We have described the structure of all linear intervals of the alt-Tamari posets, according to their height.
	We can adapt the combinatorial description that we gave for the Dyck lattice, and produce again a decomposition which generalizes the cases of the Dyck and the Tamari lattices.
	This proves the main result of this section, namely that all the alt-Tamari posets share the same number of linear intervals of any height.
	
	As all the alt-Tamari posets are defined on Dyck paths, it is immediate that they all have the same number of trivial intervals, or in other words, (linear) intervals of height $ 0 $.
	Furthermore, as a covering relation can be described as a Dyck path with a marked valley, it follows that every Dyck path has the same number of upper covers in every alt-Tamari posets.
	Thus, the total number of covering relations of all alt-Tamari posets is the same, namely the number of linear intervals of height $ 1 $.
	
	In this section, we will prove that for any $ k \geq 2 $, all these posets have the same number of right (resp. left) intervals of height $ k $, namely $\displaystyle \binom{2n-k}{n+1} $.
	This will be proven through a bijection between a right (resp. left) interval of height $ k $ and a Dyck path marked at a down (resp. up) step and a sequence of $ k $ Dyck paths (possibly trivial).
	The same decomposition works also for intervals of height $ k = 1 $, that is to say, they are in bijection with a pair of Dyck paths, the first being marked at a down step.
	We start with this case.
	
	In all what follows, we will use the notion of excursions as defined in \cref{sec:Dyckpaths}, not to be confused with $\delta$-excursions of an up step as defined in \cref{sec:alttamaridef}, which is always a prefix of the excursion of this up step (\cref{rem:deltaexcursion}).
	
	\begin{proposition}
		Let $\delta$ be an increment function.
		There is a bijection between covering relations in $ \Deltamn $ and pairs $ (P_0,P_1) $ of Dyck paths, where $ P_0 $ is marked at a down step, and the lengths of $ P_0 $ and $ P_1 $ add up to $ n-1 $.
	\end{proposition}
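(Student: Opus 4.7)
The plan is to reuse the bijection established for the Dyck lattice essentially verbatim, since that bijection makes no reference to $\delta$. By \cref{prop:deltacov}, every covering relation in $\Deltamn$ is of the form $P \triangleleft Q$, where $Q$ is the $\delta$-rotation of $P$ at some up step $u_i$ whose predecessor in $P$ is a down step (so that $du_i$ is a valley of $P$). Since $Q$ is determined by $P$, $u_i$ and $\delta$, it suffices to biject pairs $(P,u_i)$, where $P\in Z_n$ and $du_i$ is a valley of $P$, with pairs $(P_0,P_1)$ of Dyck paths with $P_0$ marked at a down step and $|P_0|+|P_1|=n-1$.

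Given such $(P,u_i)$, let $u_iDd$ be the \emph{ordinary} excursion of $u_i$ in $P$ (not its $\delta$-excursion), so that $D$ is a Dyck word; by \cref{rem:deltaexcursion} this excursion is well-defined and contains the $\delta$-excursion as a prefix. Factor $P = A \cdot d u_i D d \cdot \beta$, where the leading $d$ is the down step of the valley $du_i$. Define $P_1 := D$ and let $P_0$ be the word $A \cdot d \cdot \beta$ with the displayed middle $d$ marked. The same height-tracking argument as in the Dyck lattice section shows that $P_0$ is a Dyck path (the valley's $d$ in $P$ and the excursion-ending $d$ in $P$ leave $\beta$ beginning at the same height in $P$ and in $P_0$), and comparing word-lengths gives $2(|P_0|+|P_1|) = (|A|+1+|\beta|)+|D| = 2n-2$, i.e.\ $|P_0|+|P_1| = n-1$, as required.

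For the inverse, given $(P_0,P_1)$ with marked down step $\#$ in $P_0$ and $D := P_1$, insert $du_iD$ immediately before $\#$ in $P_0$ to form a Dyck path $P$; then $du_i$ is a valley of $P$, and since $D$ is a Dyck word, the subword $u_iD\#$ is exactly the ordinary excursion of $u_i$ in $P$, so the marked step is recovered. Applying the $\delta$-rotation at $u_i$ yields the associated $Q$. The two maps are inverse to each other by inspection. The main (and essentially only) point to keep in mind is that the extracted data $(P_0,P_1)$ depends only on $P$ and the chosen valley and not on $\delta$; the dependence on $\delta$ is absorbed entirely into the final step of reading off $Q$ via the $\delta$-rotation, which explains a posteriori why the number of covering relations is the same for every increment function.
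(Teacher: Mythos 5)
Your proof is correct and takes essentially the same route as the paper: both extract the $\delta$-independent data from a covering relation by deleting the ordinary excursion $u_iDd$ of $u_i$ (keeping and marking the valley's down step) to get $P_0$, setting $P_1 = D$, and both reinstate the dependence on $\delta$ only at the very end by reading off $Q$ as the $\delta$-rotation at $u_i$, using that the $\delta$-excursion is a prefix of the ordinary excursion. The only cosmetic difference is that you first reduce explicitly to pairs $(P,u_i)$ with $du_i$ a valley, which the paper does implicitly.
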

	
	\begin{proof}
		Let $ [P,Q] $ be a covering relation in $ \Deltamn $.
		We can write $ P = A d C_i B $ and $ Q = A C_i d B $, where $ C_i $ is the $\delta$-excursion of the up step $ u_i $.
		
		Let $ E $ be the excursion of $ u_i $ as defined in \cref{sec:Dyckpaths}.
		We can write $ E = C_i D $ with $ D $ a prefix of $ B = DB'$.
		Remark that $ E $ starts with $ u_i $ and ends with the down step matching with $ u_i $.
		Thus, we can write $ E = u_i E' d$ with $ E' $ a possibly empty Dyck path.
		
		Now, we can set $ P_0 = A d B' $ and $ P_1 = E'$, so that $ P_0 $ is a Dyck path marked at the down step between $ A $ and $ B' $ and $ P_1 $ is a possibly empty Dyck path.
		The lengths of $ P_0 $ and $ P_1 $ add up to $ n-1 $.
		
		\medskip
		
		Let us prove that it is a bijection.
		Let $ P_0 $ be a Dyck path of length $ n_0 \geq 1$, marked at a down step, so that we can write $ P_0 = A d B' $ with $ d $ the marked down step of $ P_0 $.
		Let $ P_1 $ be a Dyck path of length $ n_1 \geq 0 $.
		Let $ n = n_0 + n_1 + 1 $ and $\delta$ be an increment function of size $ n $.
		
		Suppose that there are $ j $ up steps in $ A $ and let $ i = j+1 $.
		We will construct $ P $ (resp. $ Q $) by inserting a Dyck path into $ P_0 $ between $ d $ and $ B' $ (resp. between $ A $ and $ B' $), so that its first up step will become the $ i $-th up step of $ P $ and $ Q $.
		
		Let $ E = u_i P_1 d $, where the up steps of $ P_1 $ are relabelled starting with $ u_{i+1} $.
		Obviously, $ E $ is the excursion of $ u_i $.
		
		Let then $ C_i $ be the $\delta$-excursion of $ u_i $ in $ E $, where $ u_i $ is considered to be the $ i $-th up step.
		This is possible because a $\delta$-excursion is always a prefix of the excursion.
		We can write $ E = C_i D $ and $ B = D B' $.
		
		Then, setting $ P = A d C_i B$ and $ Q = A C_i d B $, we have constructed a covering relation $ P \triangleleft Q $ in $ \Deltamn $.
		It is clear that this is the reciprocal of the decomposition described above and thus, this is a bijection.
		Moreover, we have $ P = A d E B' $ and this writing does not depend on $\delta$.
	\end{proof}
	
	\begin{corollary}
		Let $\delta$ and $\delta'$ be two increment functions of the same size $ n $.
		There is a bijection between covering relations in $ \Deltamn $ and $ \operatorname{Tam}^{\delta'}_n $.
		Moreover, this bijection preserves the bottom elements of the covering relations.
		
		In particular, there are $ \displaystyle \binom{2n - 1}{n-2} $ covering relations in $ \Deltamn $ for any $\delta$.
	\end{corollary}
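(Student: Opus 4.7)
The plan is to chain together two instances of the bijection established in the previous proposition, one for $\delta$ and one for $\delta'$, and then to read off the numerical count from the enumeration already done in the Tamari section.

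First, I would observe that the set of pairs $(P_0,P_1)$, where $P_0$ is a Dyck path marked at a down step and $P_1$ is a (possibly trivial) Dyck path whose sizes add up to $n-1$, is a combinatorial object that depends only on $n$ and not on any increment function. Applying the previous proposition twice therefore gives two bijections
\begin{equation*}
\{\text{covering relations in }\Deltamn\} \longleftrightarrow \{(P_0,P_1)\} \longleftrightarrow \{\text{covering relations in }\operatorname{Tam}^{\delta'}_n\},
\end{equation*}
whose composition is the desired bijection.

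Next I would verify that this bijection preserves the bottom element of the covering relation. This is the content of the closing remark of the previous proof: the bottom element $P$ of the covering relation associated to $(P_0,P_1)$ can be written as $P = A d E B'$, where $P_0 = A d B'$ (with $d$ the marked down step) and $E = u_i P_1 d$ (with $i$ determined by the number of up steps in $A$). In particular, this description of $P$ does not involve $\delta$ at all, and likewise for $\delta'$. Hence the bottom element associated to a given pair $(P_0,P_1)$ is the same regardless of which increment function we use, which is exactly the preservation statement.

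Finally, for the count, I would specialize $\delta' = \mathbf{1}$, so that $\operatorname{Tam}^{\delta'}_n = \Tamn$, and invoke the formula already established in the Tamari section, which gives $\binom{2n-1}{n-2}$ linear intervals of height $1$ in $\Tamn$. The bijection above transfers this count to any $\Deltamn$, completing the proof. There is no serious obstacle here: the content has already been done in the preceding proposition and in the Tamari enumeration, and the only thing to check carefully is that the auxiliary object $(P_0,P_1)$ is genuinely $\delta$-independent, which is immediate from the construction.
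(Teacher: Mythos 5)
Your proposal is correct and matches the paper's intended argument: the paper leaves this corollary as an immediate consequence of the preceding proposition, obtained exactly as you describe by composing the two bijections through the $\delta$-independent set of pairs $(P_0,P_1)$, using the remark that $P = AdEB'$ does not depend on $\delta$ for the preservation of bottom elements, and transferring the count from the Tamari case. No gaps.
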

	
	\begin{proposition}
		Let $\delta$ be an increment function and $ k \geq 2 $.
		
		There is a bijection between left intervals of height $ k $ in $ \Deltamn $ and sequences $ (P_0,P_1, \dots, P_k) $ of Dyck paths, where $ P_0 $ is marked at an up step, and the lengths of $ P_0, \dots, P_k $ add up to $ n-k $.
	\end{proposition}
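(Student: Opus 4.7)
The plan is to generalise the covering-relation bijection of the previous proposition: from a left interval we extract the full (standard) excursion $E$ of the distinguished up step $u_i$—so the $\delta$-excursion $C_i$, being a prefix of $E$, can be recovered from $\delta$ afterwards—and we account for the leading block $d^k$ by producing $k$ extra Dyck paths arising from the ``frame'' structure inside $A$.

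For the forward map, given $[P,Q]$ with $P = A d^k C_i B$ and $Q = A C_i d^k B$, extend $C_i$ to the full excursion $E = u_i E' d$ of $u_i$ in $P$, and write $E = C_i D$, $B = D B''$, so $P = A\, d^k\, u_i\, E'\, d\, B''$. The $k$ down steps of $d^k$ match, in the parenthesis sense, the $k$ innermost unmatched up steps of $A$, which yields a unique parsing
\[
A = A_0\, u_{j_1}\, Z_1\, u_{j_2}\, Z_2 \cdots u_{j_k}\, Z_k,
\]
with each $Z_\ell$ a (possibly empty) Dyck word. I then set $P_\ell := Z_\ell$ for $1 \le \ell \le k$ and $P_0 := A_0\, u_i\, E'\, d\, B''$, marked at $u_i$. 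An altitude check confirms $P_0$ is a Dyck path (the altitude at the end of $A_0$ in $P$ equals the altitude at the start of $B''$, namely $h-k$ where $h$ is the altitude at the end of $A$), and counting up steps gives $\frac{1}{2}|P_0| + \sum_\ell \frac{1}{2}|P_\ell| = n-k$.

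For the inverse, given $(P_0, P_1, \ldots, P_k)$ with $P_0$ marked at an up step $u_*$, parse $P_0 = A_0\, u_*\, X\, d\, B_0''$ using the standard excursion of $u_*$, and set
\[
P := A_0\, u_1'\, P_1\, u_2'\, P_2 \cdots u_k'\, P_k\, d^k\, u_*\, X\, d\, B_0'',
\]
with $k$ fresh up steps $u_\ell'$ inserted. An altitude check shows $P$ is a Dyck path; renumbering and applying $\delta$ yield the $\delta$-excursion $C_i$ of the marked up step in $P$ (a prefix of $u_* X d$), and $Q$ is the path obtained by moving $d^k$ past $C_i$. By construction, $[P,Q]$ is a left interval of height $k$. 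The two maps are mutual inverses since one reassembles exactly the data the other extracted. The main obstacle is the bookkeeping in the frame decomposition of $A$: its canonicity relies on the parenthesis-matching between $d^k$ and the $k$ innermost unmatched up steps of $A$, which forces the $u_{j_\ell}$ and the Dyck words $Z_\ell$ to be uniquely determined.
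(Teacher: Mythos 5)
Your proof is correct and follows essentially the same route as the paper: decompose $A$ via the parenthesis matching of the $k$ down steps into a frame $A_0 u_{j_1}Z_1\cdots u_{j_k}Z_k$, take $P_\ell = Z_\ell$, and let $P_0$ be the remaining word marked at $u_i$ (your $A_0 u_i E' d B''$ is literally the same word as the paper's $A' C_i B$), with the inverse inserting the frame and $d^k$ before the marked up step and then recomputing the $\delta$-excursion. The only cosmetic difference is that you factor the tail through the full excursion $E$ of $u_i$, which the paper does for covering relations but skips here; it changes nothing.
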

	
	\begin{proof}
		Let $ [P,Q] $ be a left interval of height $ k \geq 2 $ in $ \Deltamn $.
		By definition of left intervals, we can write $ P = A d^k C_i B $ and $ Q = A C_i d^k B $, where $ C_i $ is the $\delta$-excursion of the up step $ u_i $.
		
		Let $ u_{i_1}, \dots, u_{i_k} $ be the up steps of $ A $ matching with the $ k $ down steps of $ P $ between $ A $ and $ B $.
		Then we can write $ P = A' u_{i_1} P_1 u_{i_2} \dots u_{i_k} P_k d^k C_i B$, where $ P_1, \dots, P_k $ are $ k $ possibly trivial Dyck paths.
		We then set $ P_0 =  A' C_i B$, and it is a Dyck path marked at the first step of $ C_i $.
		It is clear that the total number of up steps in $ P_0, \dots, P_k $ is $ n - k $ since all up steps of $ P $ except $ u_{i_1}, \dots, u_{i_k} $ are in exactly one of these Dyck paths $ P_0, \dots, P_k $.
		
		\medskip
		
		Let us prove that this decomposition is bijective by constructing its reciprocal.
		
		Let $ P_0 $ be a Dyck path of length $ n_0 \geq 1$, marked at an up step, so that we can write $ P_0 = A' u B' $ with $ u $ the marked up step of $ P_0 $.
		Let $ P_1, \dots, P_k $ be $ k $ Dyck paths of respective lengths $ n_1, \dots, n_k \geq 0 $.
		Let $ n = \sum_{i=0}^{k} n_i + k $ and $\delta$ be an increment function of size $ n $.
		
		We will build $ P $ and $ Q $ by inserting these $ k $ paths and $ k $ additional pairs of up and down steps between $ A' $ and $ B' $.
		Let $ i_1 - 1 $ be the number of up steps in $ A' $ and for $ 2 \leq j \leq k $ let $ i_j = i_{j-1} + n_{j-1} + 1 $.
		
		Then, we can set $ A =  A' u_{i_1} P_1 u_{i_2} \dots u_{i_k} P_k $.
		We set as well $ P = A d^k u B' $ and this is by construction a Dyck path of length $ n $.
		Remark that the construction of $ P $ does not depend on $\delta$.
		
		Let now $ C $ be the $\delta$-excursion of the up step $ u $ preceding $ B' $ in $ P $.
		We can write $ u B' = C B $ so that $ P = A d^k C B $.
		Setting finally $ Q = A C d^k B $, we obtain a left interval $ [P,Q] $ of height $ k $.
		Again, this is clearly the reciprocal of the decomposition of left intervals defined above.
	\end{proof}
	
	\begin{corollary}
		Let $\delta$ and $\delta'$ be two increment functions of the same size $ n $ and $ k \geq 2 $.
		There is a bijection between left intervals of height $ k $ in $ \Deltamn $ and $ \operatorname{Tam}^{\delta'}_n $.
		Moreover, this bijection preserves the bottom element of the intervals.
		
		\noindent In particular, there are $ \displaystyle \binom{2n - k}{n+1} $ left intervals of height $ k $ in $ \Deltamn $ for any $\delta$.
	\end{corollary}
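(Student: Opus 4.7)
The plan is to exploit the bijection established in the preceding proposition. That proposition associates, to any increment function $\delta$, a bijection between left intervals of height $k$ in $\Deltamn$ and sequences $(P_0,P_1,\ldots,P_k)$ of Dyck paths with $P_0$ carrying a marked up step and $|P_0|+|P_1|+\cdots+|P_k|=n-k$. Crucially, the target set of this bijection is a purely combinatorial object: it depends only on $n$ and $k$, not on $\delta$. Composing the bijection for $\Deltamn$ with the inverse of the bijection for $\operatorname{Tam}^{\delta'}_n$ therefore yields immediately the desired bijection between left intervals of the two posets.

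The preservation of the bottom element is visible directly from the construction in the proof of the preceding proposition. Writing $P_0 = A'\,u\,B'$ with $u$ the marked up step, the bottom element associated to $(P_0,\ldots,P_k)$ is
\[ P = A'\, u_{i_1} P_1 u_{i_2} P_2 \cdots u_{i_k} P_k\, d^k\, u\, B', \]
where the indices $i_j$ are determined only by the number of up steps of $A'$ and the lengths of the $P_j$. This formula makes no reference to $\delta$, so the composed bijection sends a left interval of $\Deltamn$ to the unique left interval of $\operatorname{Tam}^{\delta'}_n$ sharing the same bottom element. The only piece of data where $\delta$ enters is the $\delta$-excursion $C_i$ used to obtain $Q$ from $P$, and in the reverse direction $C_i$ is uniquely determined by $P$, $\delta$, and $i$, so there is no hidden $\delta$-dependence in the decomposition.

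For the enumeration, I would use a generating-series argument. Let $A(t)$ be the generating series of Dyck paths (including the empty one), so that $tA'(t)$ is the generating series of Dyck paths with a marked up step. The sequences $(P_0,\ldots,P_k)$ counted above are then enumerated by $tA'(t)\cdot A(t)^k$, and the number of left intervals of height $k$ in $\Deltamn$ equals $[t^{n-k}]\,tA'A^k$. This is exactly the coefficient already computed in \cref{sec:Tamari} via Lagrange inversion: it equals $\binom{2n-k}{n-k-1}$, which by the symmetry $\binom{a}{b}=\binom{a}{a-b}$ rewrites as $\binom{2n-k}{n+1}$.

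There is no substantial obstacle here, as the corollary essentially packages together two results already in the paper: the explicit $\delta$-independent decomposition from the preceding proposition, and the Lagrange-inversion calculation of $[t^{n-k}]\,tA'A^k$ carried out for the Tamari lattice. The only verification worth highlighting is the $\delta$-independence of the bottom element formula, which is transparent from the explicit construction quoted above.
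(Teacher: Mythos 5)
Your proposal is correct and matches the paper's (implicit) reasoning exactly: the corollary is left unproved in the paper precisely because it follows by composing the two $\delta$-dependent bijections through the common $\delta$-independent target set of sequences $(P_0,\dots,P_k)$, with bottom-element preservation coming from the remark in the proposition's proof that the construction of $P$ does not depend on $\delta$, and the count $[t^{n-k}]\,tA'A^k=\binom{2n-k}{n-k-1}=\binom{2n-k}{n+1}$ coming from the Lagrange-inversion computation already carried out in the Tamari section.
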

	
	\begin{proposition}
		Let $\delta$ be an increment function and $ k \geq 2 $.
		
		There is a bijection between right intervals of height $ k $ in $ \Deltamn $ and sequences $ (P_0,P_1, \dots, P_k) $ of Dyck paths, where $ P_0 $ is marked at a down step, and the lengths of $ P_0, \dots, P_k $ add up to $ n-k $.
	\end{proposition}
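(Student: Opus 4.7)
The plan is to mirror the bijection from the previous proposition for left intervals, with the roles of the single down step and the $k$ $\delta$-excursions swapped. Given a right interval $[P,Q]$ of height $k \geq 2$ with $P = A d C_1 \dots C_k B$ and $Q = A C_1 \dots C_k d B$, each $C_j$ being the $\delta$-excursion of its first up step $u_{i_j}$, let $d_{i_1}, \dots, d_{i_k}$ be the down steps of $P$ matching $u_{i_1}, \dots, u_{i_k}$. By \cref{rem:deltaexcursion} each $C_j$ is a prefix of the ordinary excursion $E_j = u_{i_j} M_j d_{i_j}$, and by \cref{lem:twoexcursions} the $E_j$'s are pairwise nested or disjoint. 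Removing the $k$ matching pairs $(u_{i_j}, d_{i_j})$ from $P$ while keeping the single $d$ between $A$ and $C_1$ as a marked down step produces a Dyck path $P_0$; the $k$ residual Dyck paths $P_1, \dots, P_k$ record the Dyck material housed inside each $E_j$ (between $u_{i_j}$, any excursions nested within, and $d_{i_j}$).

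For the reverse direction, given $(P_0, P_1, \dots, P_k)$ with $P_0 = A' d B'$ marked at $d$, I insert a nested excursion structure immediately after $d$: $u_{i_1}$ is placed right after $d$, with $u_{i_2}$ nested inside its would-be excursion at position $|C_1| + 1$ (where $C_1$ is the $\delta$-excursion of $u_{i_1}$ as read off from the partial insertion), then $u_{i_3}$ nested analogously inside $E_2$, and so on, interleaving $P_1, \dots, P_k$ as the Dyck material around the matching down steps. This is modeled on the Dyck lattice description of right intervals at the start of \cref{sec:Dyck}, extended to general $\delta$. The resulting $P$ is a Dyck path of size $n$: inserting $k$ excursions adds exactly $k$ up-down pairs, confirming the size identity $|P_0| + \sum_{j=1}^k |P_j| = n - k$. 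Setting $A = A'$, the $\delta$-excursions of the inserted up steps $u_{i_j}$ appear consecutively in $P$ immediately after the marked $d$, yielding the right interval decomposition $P = A d C_1 \dots C_k B$.

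The main obstacle is to verify that, for arbitrary $\delta$, the nested insertion produces $\delta$-excursions $C_j$ that are actually consecutive in $P$. This follows directly from the nested placement: by construction, the letter immediately after $C_j$ is $u_{i_{j+1}}$, so the $\delta$-excursion $C_{j+1}$ starts exactly where $C_j$ ends. The mutual inversion of the two constructions, and the independence of the $\delta$-free combinatorial data $(P_0, P_1, \dots, P_k)$ from the choice of $\delta$, then follows exactly as in the covering relation and left interval cases already treated in the preceding propositions.
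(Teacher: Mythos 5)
Your construction coincides with the paper's: you extract $P_0$ by deleting the whole excursion of $u_{i_1}$ (equivalently, the $k$ matched pairs together with the $P_j$'s) and you rebuild $P$ by the same nested reinsertion, which amounts to writing each $uP_jd$ as $C_jD_j$, placing the prefixes $C_1,\dots,C_k$ consecutively after the marked down step and the completions $D_k,\dots,D_1$ in reverse order, exactly as in the paper. The only quibble is a wording slip in the forward direction --- deleting just the $k$ matched pairs does not by itself yield $P_0$; the $P_j$ material must be extracted as well --- but the rest of your description makes clear that this is what you intend.
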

	
	\begin{proof}
		Let $ [P,Q] $ be a right interval of height $ k \geq 2 $ in $ \Deltamn $.
		By definition of right intervals, we can write $ P = A d C_1 \dots C_k B $ and $ Q = A C_1 \dots C_k d B$ with $ k $ $\delta $-excursions $ C_1, \dots, C_k $.
		
		Let $ u_{i_1}, \dots, u_{i_k} $ be the steps with which $ C_1, \dots, C_k $ start respectively.
		Let $ d_1, \dots, d_k $ be the down steps matching with $ u_{i_1}, \dots, u_{i_k} $.
		
		Let $ D_k $ be the part of the path $ P $ starting just after $ C_k $ and ending just after $ d_k $.
		Then, $ E_k = C_k D_k $ starts with $ u_{i_k} $ and ends with its matching down step $ d_k $, so $ E_k $ is the excursion of  $ u_{i_k} $.
		In particular, if $ E = C_k $ then $ D_k $ is empty.
		
		For $ k > j \geq 1$, let $ D_j $ be the part of $ P $ starting just after $ D_{j+1} $ (which may be empty) and ending just after $ d_j $.
		As previously, the excursion of  $ u_{i_j} $ in $ P $ is either $ E_j = C_j $ and $ D_j $ is empty or it is $ E_j = C_j E_{j-1} D_j$ and $ D_j $ is not empty.
		
		Now, we can write $ P = A d C_1 \dots C_k D_k \dots  D_1 B'$.
		Set $ P_0 = A d B'$, marked at the down step between $ A $ and $ B' $ and $ C_j D_j = u_{i_j} P_j d_j$ for $ 1 \leq j \leq k $ and we have the decomposition as stated.
		
		\medskip
		
		We now prove that this decomposition is bijective by constructing its reciprocal.
		
		Let $ P_0 $ be a Dyck path of length $ n_0 \geq 1$, marked at a down step, so that we can write $ P_0 = A d B' $ with $ d $ the marked up step of $ P_0 $.
		Let $ P_1, \dots, P_k $ be $ k $ Dyck paths of respective lengths $ n_1, \dots, n_k \geq 0 $.
		Let $ n = \sum_{i=0}^{k} n_i + k $ and $\delta$ be an increment function of size $ n $.
		
		We will build $ P $ and $ Q $ by inserting these $ k $ paths one by one and $ k $ additional pairs of up and down steps between $ A $ and $ B' $.
		
		Let $ i_1-1 $ be the number of up steps in $ A $. 
		Let $ C_1 $ be the $\delta$-excursion of the first step of $ u P_1 d$ where the steps are relabelled starting with $ i_1 $.
		For $ 2 \leq j \leq k $, let $ i_j - 1 $ be the number of up steps in $ A C_1 \dots C_{j-1} $ and $ C_j $ be the $\delta$-excursion of  $ u P_j d$ where the steps are relabelled starting with $ i_j $.
		Thus, for all $ 1 \leq j \leq k $, we can write $ u P_j d = C_j D_j$.
		
		Now, let $ P = A d C_1 \dots C_k D_k \dots D_1 B'$.
		Writing $ B = D_k \dots D_1 B' $, we can set $ Q = A C_1 \dots C_k d B $.
		By construction, $ [P,Q] $ is a right interval of height $ k $ and again, this is clearly the reciprocal of the decomposition defined above.
	\end{proof}
	
	\begin{corollary}
		Let $\delta$ and $\delta'$ be two increment functions of the same size $ n $ and $ k \geq 2 $.
		There is a bijection between right intervals of height $ k $ in $ \Deltamn $ and $ \operatorname{Tam}^{\delta'}_n $.
		
		\noindent In particular, there are $ \displaystyle \binom{2n - k}{n+1} $ right intervals of height $ k $ in $ \Deltamn $ for any $\delta$.
	\end{corollary}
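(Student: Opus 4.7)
The first assertion of the corollary is essentially a formal consequence of the preceding proposition applied twice. Indeed, that proposition constructs, for every increment function $\delta$, a bijection between right intervals of height $k$ in $\Deltamn$ and sequences $(P_0, P_1, \dots, P_k)$ of Dyck paths with $P_0$ marked at a down step and total length $n-k$. The target set of this bijection does not depend on $\delta$. Composing the bijection for $\delta$ with the inverse of the bijection for $\delta'$ therefore yields the claimed bijection between right intervals of height $k$ in $\Deltamn$ and in $\operatorname{Tam}^{\delta'}_n$. Note that, in contrast with the corollary about left intervals, we do not expect the bottom element to be preserved: the reconstruction procedure in the proposition uses the $\delta$-excursion of an up step of $P_0$, and this excursion genuinely depends on $\delta$.

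For the counting formula, the plan is to enumerate the sequences $(P_0, P_1, \dots, P_k)$ directly with generating series, reusing the computations of Section 3. Let $A(t)$ denote the generating series of Dyck paths. Since a Dyck path of length $n$ has exactly $n$ down steps, the series enumerating Dyck paths marked at a down step is $t A'(t)$. Hence the series enumerating sequences $(P_0, P_1, \dots, P_k)$ according to the total length $|P_0| + \cdots + |P_k|$ is $t A'(t) \cdot A(t)^k$. The construction in the proposition inserts $k$ additional up-down pairs, raising the length from $n-k$ to $n$, so the generating series of right intervals of height $k$ according to $n$ is
\[
t^k \cdot t A'(t) \cdot A(t)^k \;=\; t^{k+1} A'(t) A(t)^k.
\]
This is precisely half of the series $T_k = 2 t^{k+1} A' A^k$ for the linear intervals of height $k$ in the Dyck lattice that was computed in Section 3, which is consistent with the fact that such intervals split evenly between left and right types.

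The last step is just coefficient extraction. The Lagrange inversion computation already carried out in Section 3 gives
\[
[t^n]\, t^{k+1} A'(t) A(t)^k \;=\; \binom{2n-k}{n-k-1},
\]
and the symmetry $\binom{2n-k}{n-k-1} = \binom{2n-k}{n+1}$ yields the desired count. There is no real obstacle in this corollary; both of its assertions are straightforward consequences of the preceding proposition combined, respectively, with a trivial composition of bijections and with the Lagrange inversion already done for the Tamari lattice.
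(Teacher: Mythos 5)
Your proof is correct and follows the same route the paper intends: the corollary is obtained by composing the bijection of the preceding proposition for $\delta$ with the inverse of the one for $\delta'$ through the $\delta$-independent set of sequences $(P_0,\dots,P_k)$, and the count $\binom{2n-k}{n+1}=\binom{2n-k}{n-k-1}$ is exactly half of the coefficient of $S_k = T_k = 2t^{k+1}A'A^k$ already extracted by Lagrange inversion. Your side remark that the bottom element is not preserved here (unlike in the left-interval corollary) is accurate and consistent with the paper's omission of that clause.
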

	
	\appendix
	
	\section{Table}
	
	Here are the first few coefficients of the series $ S_k $.  The total sequence can be found on the OEIS \cite[A344136]{oeis:A344136}.
	
	\begin{table}[h!]
		\begin{tabular}{c|ccccccc|c}
			$t^n \backslash S_k$ & $S_0$                         & $S_1$               & $S_2$                 & $S_3$                 & $S_4$                   & $S_5$ & $S_6$ & \textrm{Total} \\ \hline
			$t^1$                & 1                             &                     &                       &                       &                         &       &    &  1 \\
			$t^2$                & 2                             & 1                   &                       &                       &                         &       &     &  3 \\
			$t^3$                & 5                             & 5                   & 2                     &                       &                         &       &     &  12\\
			$t^4$                & 14                            & 21                  & 12                    & 2                     &                         &       &    &  49 \\
			$t^5$                & 42                            & 84                  & 56                    & 14                    & 2                       &       &    &  198 \\
			$t^6$                & 132                           & 330                 & 240                   & 72                    & 16                      & 2     &  &   792  \\
			$t^7$                & 429                           & 1287                & 990                   & 330                   & 90                      & 18    & 2  &   3146\\     
		\end{tabular}
	\end{table}

	\bibliographystyle{plain}
	\bibliography{article_TamLin.bib}
	
	\noindent Clément Chenevière\\
	
	\noindent 	Institut de Recherche Mathématique Avancée, UMR 7501\\
	Université de Strasbourg et CNRS\\
	7 rue René-Descartes, 67000 Strasbourg, France\\
	\&\\
	\noindent Ruhr-Universität Bochum,\\
	Universitätsstraße 150, 44780 Bochum, Germany\\
	
	\noindent E-mail: ccheneviere@unistra.fr \\
	Website: \url{https://irma.math.unistra.fr/~cheneviere/}

\end{document}